\def\textsc{}
\def\mid{|}
\newcommand{\eqref}[1]{(\ref{#1})}
\newcommand{\BR}{\mathbb{R}}
\newcommand{\SL}{\sum}
\newcommand{\al}{\alpha}
\newcommand{\be}{\beta}
\newcommand{\ga}{\gamma}
\newcommand{\De}{\Delta}
\newcommand{\La}{\Lambda}
\newcommand{\ME}{\mathbf E}
\newcommand{\CF}{\mathcal F}
\newcommand{\CG}{\mathcal G}
\newcommand{\CE}{\mathcal E}
\newcommand{\MP}{\mathbf P}
\newcommand{\CM}{\mathcal M}
\newcommand{\CH}{\mathcal H}
\newcommand{\CN}{\mathcal N}
\newcommand{\Oa}{\Omega}
\newcommand{\si}{\sigma}
\newcommand{\eps}{\varepsilon}
\newcommand{\Ra}{\Rightarrow}
\newcommand{\ol}{\overline}
\newcommand{\MQ}{\mathbf{Q}}
\newcommand{\taub}{\bolds{\tau}}
\newcommand{\ess}{\operatorname{ess}}
\newtheorem{thmm}{Theorem}[section]
\newtheorem{lemma}[thmm]{Lemma}
\newtheorem{lemmaa}{Lemma}[section]
\newtheorem{prop}[thmm]{Proposition}
\newtheorem{cor}[thmm]{Corollary}
\begin{document}
\begin{frontmatter}

%\dochead{}
\title{Diverse market models of competing Brownian particles with
splits and mergers\thanksref{T1}}
\runtitle{Splits and mergers}
\thankstext{T1}{Supported in part by NSF Grants
DMS-09-05754, DMS-10-07563, DMS-13-08340 and DMS-14-05210.}

\begin{aug}
% Corresponding author: Ioannis Karatzas - ik@cpw.math.columbia.edu% Updated by VTEXPTS2LaTeX.exe, 08.07.2015 07:35
%Updated by VTEXPTS2LaTeX.exe, 07.07.2015 13:15
\author[A]{\fnms{Ioannis}~\snm{Karatzas}\corref{}\ead[label=e1]{ik@math.columbia.edu}\ead[label=e2]{ik@enhanced.com}}
\and
\author[B]{\fnms{Andrey}~\snm{Sarantsev}\ead[label=e3]{ansa1989@math.washington.edu}}
%\author[A]{\fnms{}~\snm{}\ead[label=e1]{}}%,
%\author[]{\fnms{}~\snm{}\ead[label=]{}}
% \and
%\author[]{\fnms{}~\snm{}\ead[label=]{}}
\runauthor{I. Karatzas and A. Sarantsev}
\affiliation{Columbia University and University of Washington}
%\affiliation{}
%\dedicated{}
\address[A]{Department of Mathematics\\
Columbia University\\
New York, New York 10027\\
USA\\
and\\
INTECH Investment Management LLC\\
One Palmer Square, Suite 441\\
Princeton, New Jersey 08542\\
USA\\
\printead{e1}\\
\phantom{E-mail:\ }\printead*{e2}}

\address[B]{Department of Mathematics\\
University of Washington\\
Box 354350\\
Seattle, Washington 98195\\
USA\\
\printead{e3}}
%\address[A]{\\\printead{e1}}
%\address[]{\\\printead{}}
\end{aug}

% HISTORY:
%
\received{\smonth{4} \syear{2014}}% Updated by VTEXPTS2LaTeX.exe,
%07.07.2015 13:15
%
\revised{\smonth{2} \syear{2015}}% Updated by VTEXPTS2LaTeX.exe,
%07.07.2015 13:15

% ABSTRACT
%
\begin{abstract}
We study models of regulatory breakup, in the spirit of Strong and
Fouque [\textit{Ann. Finance} \textbf{7} (2011)
349--374] but with a fluctuating number of companies. An important
class of market models is based on systems of competing Brownian
particles: each company has a capitalization whose logarithm behaves as
a Brownian motion with drift and diffusion coefficients depending on
its current rank. We study such models with a fluctuating number of
companies: If at some moment the share of the total market
capitalization~of a company reaches a fixed level, then the company is
split into two parts of random size. Companies are also allowed to
merge, when an exponential clock rings. We find conditions under which
this system is nonexplosive (i.e., the number of companies remains
finite at all times) and diverse, yet does not admit arbitrage opportunities.
\end{abstract}

% KEYWORDS
% Pirmas kwd is didziosios raides
%
\begin{keyword}[class=AMS]
\kwd[Primary ]{60K35}
\kwd{60J60}
\kwd[; secondary ]{91B26}
\end{keyword}

\begin{keyword}
\kwd{Competing Brownian particles}
\kwd{splits}
\kwd{mergers}
\kwd{diverse markets}
\kwd{arbitrage opportunity}
\kwd{portfolio}
\end{keyword}
%
%\begin{keyword}[class=AMS]
%\kwd[Primary ]{}
%\kwd{}
%\kwd[; secondary ]{}
%\end{keyword}
%\begin{keyword}
%\kwd{}
%\end{keyword}
\end{frontmatter}

%s1 #&#
\section{Introduction}\label{sec1}

Stochastic Portfolio Theory (SPT) is a fairly recently developed area
of mathematical finance. It tries to describe and understand
characteristics of large, real-world equity markets using an
appropriate stochastic framework, and to analyze this framework
mathematically. It was introduced by \textsc{Fernholz} in the late
1990s, and was developed fully in his book \cite{F2002}; a survey of
somewhat more recent developments appeared in \cite{FK2009}.

One feature of real-world markets that this theory tries to account for
is diversity. A market is called \textit{diverse}, if at no time is a
single stock allowed to dominate almost the entire market in terms of
capitalization. To be a bit more precise, let us define the \textit{market
weight} of a certain company as the ratio of its capitalization (stock
price, times the number of shares outstanding) to the total
capitalization of the entire market, across all companies. If no market
weight ever exceeds a certain threshold, a fixed number between zero
and one, then this market model is called \textit{diverse}.

Such models have one very important feature: with a fixed number of
companies and a strictly nondegenerate covariance structure, they
allow arbitrage on certain fixed, finite time-horizons. The market
portfolio can be outperformed in these models using fully invested,
long-only portfolios. This was shown in \cite{F2002}, Chapter~3;
further examples of portfolios outperforming the market are given in
\cite{FK2005,FKK2005}, \cite{FK2009}, Section~11. Some such
models were constructed in \cite{FKK2005,OR2006,Ruf2013,MyOwn1} and \cite{FK2009}, Chapter~9; see also the
related articles \cite{FFR2007,K2008}.

Another feature of large equity markets that SPT tries to capture is
that stocks with larger capitalizations tend to have smaller growth
rates and smaller volatilities. In an attempt to model this phenomenon,
the authors of \cite{BFK2005} introduced a new model of \textit{Competing
Brownian Particles} (CBPs). Imagine a fixed, finite number of particles
moving on the real line; at each time, they are ranked from top to
bottom, and each of them undergoes Brownian motion with drift and
diffusion coefficients depending on its current rank. From these random
motions, one constructs a market model with a finite number of stocks:
the logarithms of the companies' capitalizations evolve as a system of
CBPs. Recently, these systems were studied extensively (see \cite
{PP2008,CP2010,IchibaThesis,Ichiba11,FIK2013,IK2010,IKS2013,IPS2012,PS2010,MyOwn3}) and were generalized in several directions:
\cite{S2011,FIK2013b,KPS2012,Ichiba11,MyOwn5,MyOwn6}. However, these market models are \textit{not}
diverse; see \cite{BFK2005}, Section~7 and Remark~\ref{NoDiv} below.

We would like to alter the CBP-type model a bit, in order to make it
diverse. In real equity markets, diversity is in large part a
consequence of anti-monopolistic legislation and regulation: when a
company becomes dominant, a governmental agency (the ``regulator'')
forcibly splits it into smaller companies. We implement this idea in
our model.

In this paper, we construct a diverse model from the above CBP-based
one. We fix a certain \textit{threshold} between 0 and 1. When a company's
market weight reaches this threshold, the regulatory agency enforces a
breakup of the company into two (random) parts. We also allow for the
opposite phenomenon: companies can merge at random times.

The mechanism for merging companies is as follows: immediately after a
split or merger, we set an exponential clock whose rate depends %
%\footnote{ I changed here ``depending'' to "depends". (IK)}
on the number of extant companies. If the clock rings before any market
weight has hit the threshold, the regulatory agency picks two companies
at random as candidates for a possible merger, according to a certain
rule described right below. If the planned action results in a company
with market weight exceeding the threshold, then this putative merger
is suppressed; otherwise, it is allowed to proceed.

We use the following rule for mergers: The company which currently
occupies the highest capitalization rank is excluded from
consideration, and two of the remaining $ N-1 $ companies are chosen
randomly, according to the uniform distribution over the $
{N-1\choose2}$ possible choices. With this rule, and with a threshold
sufficiently close to 1, the merger will always be allowed to proceed.
In this manner, the process of capitalizations evolves as an
exponentiated system of competing Brownian particles, until either (i)
one of the market weights hits the threshold, or (ii) the exponential
clock rings. In case (i), the number of companies will increase by one;
in case (ii), it will decrease by one.

We refer the reader to the very interesting paper \cite{FS2011}, which
considers general (i.e., not just CBP-based) equity market models of
regulatory breakup with a split when a market weight reaches the given
threshold. Mergers in that paper obey a different rule than they do
here: at the moment of any split, there is a simultaneous merger of the
two smallest companies, so the total number of companies remains
constant. We feel that this feature is a bit too restrictive, so in the
model developed here mergers are allowed to happen independently of
splits. This comes at a price, which is both ``technical'' and
substantive: the number of companies in the model is now fluctuating
randomly, in ways that need to be understood before any reasonable
analysis can go through. The foundational theory for generic market
models with a randomly varying number of stocks was developed by
\textsc
{Strong} in the important and very useful article \cite{Strong2011}.

It is also important to stress that the mechanisms enforcing diversity
in the model studied here are quite different from those used in \cite
{FKK2005} or \cite{FK2009}. In those papers, the number of companies is
fixed and strong repulsive drifts are imposed, in order to keep the
configuration of market weights from reaching certain regions of the
unit simplex; the resulting market weights, however, have continuous
paths. Here, by contrast, the number of companies fluctuates due to
breakups and mergers; and the resulting market weights exhibit
discontinuities at such ``event times.'' These differences have a rather
drastic effect: relative arbitrage, which \textit{does} exist with respect
to the market portfolio in \cite{FKK2005} and \cite{FK2009}, is
proscribed here.

%%%%%%%%%%%
%s1.1 #&#
\subsection{Preview}
%%%%%%%%%%%

The main results of this paper are as follows. First, we show that
under certain conditions the process that counts the number of
companies is \textit{nonexplosive}: this number does not become infinite
in finite time, so the model can be defined on infinite time horizons.
Second, this model turns out to admit an equivalent martingale measure
by means of a suitable \textsc{Girsanov} transformation: \textit{although
diverse, the model proscribes arbitrage.} This is in contrast with the
models from~\cite{FK2009}, where splits/mergers are not allowed.
Indeed, it was observed in \cite{Strong2011} that in the presence of
splits/mergers, diversity might not lead to arbitrage; in \cite
{FS2011}, \textsc{Strong} and \textsc{Fouque} established this
for their models with a fixed number of companies. We establish the
same result for our model, which allows the number of extant companies
to fluctuate randomly.

The paper is organized as follows. Section~\ref{Info} provides an
informal yet
somewhat detailed description of this model, and states the main
results. Section~\ref{formal} lays out the formal construction of the model.
Section~\ref{proofs} is devoted to the proofs of our results. The
\hyperref[app]{Appendix}
develops a crucial technical result.

%s2 #&#
\section{Informal construction and main results}
\label{Info}
%s2.1 #&#
\subsection{Description of the model}
%%%%%%%%%%%%%%%%%

Consider a stock market with a variable number of companies
\[
X (\cdot)= \bigl\{X(t), 0 \le t < \infty \bigr\},\qquad X(t) = \bigl(X_1(t),
\ldots, X_{\CN(t)}(t) \bigr)^\prime,
\]
where $X_i(t) > 0$ is the capitalization of the company $ i $ at time
$ t \ge0 $, and $\CN(t)$ is the number of companies in the market at
that time. The integer-valued random function $ t \mapsto\CN(t) $
will be piecewise constant; we shall call it the \textit{counting process}
of our model, as it records the number of companies that are extant at
any given time.

At each interval of constancy of this process, the logarithms $ Y_i
(\cdot) =\break  \log X_i (\cdot),  i = 1, \ldots, N $ behave like a system
of Competing Brownian Particles (CBPs) with rank-dependent drifts and
variances. More precisely, the $k$th largest among the $N$ real-valued
processes $ Y_1 (\cdot), \ldots, Y_N(\cdot) $ behaves like a
Brownian motion with local drift $g_{Nk}$ and local variance $ \si
_{Nk}^2 $, for $k = 1, \ldots, N$. These $ g_{Nk} $ and $ \si_{Nk}
> 0 $ with $ N \ge2 , 1 \le k \le N$, are given real constants.
If two or more particles occupy the same position at the same time,
then we break the tie and assign ranks according to the lexicographic
order; more on this in Section~\ref{formal}. We call this model (with constant
number of stocks) a \textit{CBP-based model}.

When the \textit{market weight}
%
%e1 #&#
\begin{equation}
\label{weights} \mu_i(t) = \frac{X_i(t)}{ \mathcal{C}(t) } ,\qquad \mathcal{C}(t) :=
X_1(t) + \cdots+ X_{\CN(t)}(t)
\end{equation}
of some company $ i = 1,\ldots, \CN(t) $ reaches a given, fixed
threshold $1 - \delta$, a governmental regulatory agency splits this
company into two new companies; one with capitalization $\xi X_i(t)$,
and the other with capitalization $(1 - \xi)X_i(t)$. Here, the random
variable $\xi$ is independent of everything that has happened in the
past, and has a given probability distribution $F$ supported on $[1/2,
1)$; whereas $\delta\in(0, 1/2)$ is a given constant.

In addition, for every integer $ N \ge3 $ there is an exponential
clock with rate $\lambda_N \ge0 $ (a rate of zero means that the clock
never rings); we take formally $\lambda_2=0$, cf. Remark~\ref{Impl}
below. When this clock rings, two companies are chosen at random, as
candidates to be merged and form one new company. The choice is made
according to a certain probability distribution $\mathcal P_N( X(t))$
on the family of subsets of $\{1, \ldots, N\}$ which contain exactly
two elements, and this distribution depends on the current state $X(t)$
of the system. (One example of such dependence is given below, in
Assumption~\ref{ass4}; additional clarification is provided in Section~\ref{Notation}.) If the so-amalgamated company has market weight larger
than or equal to $1 - \delta$, the putative merger is suppressed;
otherwise, the merger is allowed to proceed.

Within the framework of the model thus described in an informal way,
and more formally in Section~\ref{formal} below, we raise and answer
the following questions:
\begin{longlist}[(ii)]
\item[(i)] Are there explosions in this model (i.e., can the number of
companies become infinite in finite time) with positive probability?
Can this model be defined on an infinite time-horizon?

\item[(ii)] What is the concept of a portfolio in this model? Does the
model admit (relative) arbitrage?
\end{longlist}

The answers are described in Theorems \ref{thmm1} and \ref{thmm2} below.

%re1 #&#
\begin{rmk}
\label{Impl}
We note that this model is free of \textit{implosions}, by its
construction: when there are only two companies, their putative merger
would result in a company with market weight equal to 1 and would thus
be suppressed. This is the reason we took at the outset $ \lambda_2=0 $,
meaning that with only two companies present the merger clock never
rings. As a result, at any given moment there are at least two
companies in the equity market model under consideration; and we need
not specify the rule for picking companies when there are only two of
them, $N = 2$.
\end{rmk}

%s2.2 #&#
\subsection{Portfolios and wealth processes}

In the context of the above model, a~\textit{portfolio} is a process
\[
\pi(\cdot) = \bigl\{\pi(t), 0 \le t < \infty \bigr\} , \qquad \pi (t) = \bigl(
\pi_1(t), \ldots, \pi_{\CN(t)}(t) \bigr)^\prime
\]
for which there exists some real constant $K_\pi\ge0$ such that $
|\pi_i(t)| \le K_\pi$ holds for all $0 \le t < \infty$ and $ i =
1, \ldots, \CN(t)$. The quantity $\pi_i(t)$ is called the \textit{portfolio weight} assigned at time $ t $ by the portfolio $\pi(\cdot)$
to the company $ i $; whereas
%
%e2 #&#
\begin{equation}
\label{MM} \pi_0(t):= 1 - \SL_{i=1}^{\CN(t)}
\pi_i(t) ,\qquad 0 \le t < \infty
\end{equation}
represents the proportion of wealth invested at time $t$ in a money
market with zero interest rate. A portfolio is called \textit{fully
invested}, if it never touches the money market, that is, if $ \pi_0
(\cdot) \equiv0 $; it is called \textit{long-only}, if $ \pi_i ( t)
\ge0 $ holds for all $ i=0, 1, \ldots, \mathcal{N} ( t) $, $ 0
\le t < \infty$.

The prototypical fully invested, long-only portfolio is the \textit{market
portfolio} $ \pi(\cdot) \equiv\mu(\cdot) $ of (\ref{weights}). At
the other extreme stands the \textit{cash portfolio} $ \pi(\cdot)
\equiv\kappa( \cdot) $ with $ \kappa_i ( t) = 0 $ for all $
i=1, \ldots, \mathcal{N} ( t) $, $0 \le t < \infty$; this never
touches the equity market, and keeps all wealth in cash at all times.

When the counting process $ \CN(\cdot) $ jumps up (after a split) or
down (after a merger), the portfolio process behaves as follows:
\begin{longlist}[(ii)]
\item[(i)] if two companies merge, the portfolio weight corresponding to the
new company's stock is the sum of the portfolio weights corresponding
to the two old stocks; whereas

\item[(ii)] if a company gets split into two new ones, its weight in the
portfolio is partitioned in proportion to the weights of the
newly minted companies.
\end{longlist}
The formal description of these actions is postponed to Section~\ref{formal}.

Suppose now that a small investor, whose actions cannot influence asset
prices, starts with initial capital \$1 and invests in the stock market
according to a portfolio rule $\pi(\cdot)$. The corresponding \textit{wealth process} $ V^{\pi} (\cdot)= \{V^{\pi}(t), 0 \le t < \infty\}
$ takes then values in $ (0, \infty) $, satisfies
%
%e3 #&#
\begin{equation}
\label{wealth} \frac{\mathrm{d}V^{\pi}(t)}{V^{\pi}(t)} = \SL_{i=1}^{\CN(t)}\pi
_i(t) \frac{ \mathrm{d}X_i(t) }{X_i(t)} ,
\end{equation}
and is not affected when the number of companies changes, that is, when
the counting process $ \CN(\cdot) $ jumps. For a derivation of
\eqref{wealth} with a fixed number of companies, see, for instance,
\cite{F2002},
page 6.

%de1 #&#
\begin{defn}[(Relative arbitrage)] \label{Rel_Arb}
We say that a given portfolio $\pi(\cdot)$ \textit{represents an
arbitrage opportunity relative to another portfolio $\rho(\cdot)$}
over the time horizon $[0,T]$, for some real number $ T >0 $, if we have
%
%e4 #&#
\begin{equation}
\label{arbitr} \MP \bigl( V^{\pi}(T) \ge V^{\rho}(T) \bigr) =1 ,\qquad
\MP \bigl( V^{\pi
}(T) > V^{\rho}(T) \bigr) >0.
\end{equation}
\end{defn}

In words: over the time-horizon $[0,T]$, the portfolio $\pi(\cdot) $
performs at least as well as $\rho(\cdot)$ with probability one, and
strictly better with positive probability. When $ \rho(\cdot) \equiv
\kappa(\cdot) $ is the cash portfolio, this reduces to the usual
definition of arbitrage.

%s2.3 #&#
\subsection{Main results}

Let us impose some conditions on the parameters of this model. A
salient feature of real-world markets is that stocks with smaller
market weights tend to have larger drift coefficients (growth rates),
so it is not unreasonable to impose the following condition.

%as1 #&#
\begin{asmp}
\label{condition}
\[
g_{N1} \le \min_{2 \le k \le N}g_{Nk} \qquad\mbox{for
every } N \ge2.
\]
\end{asmp}

We shall also impose the following conditions: there exist constants
$\ol{\si}, \underline{\si}, \ol{g}$ such that we have the following.

%as2 #&#
\begin{asmp}\label{ass2}
We have $\delta\in(0, 1/6)$, as well as
\[
0 < \underline{\si} \le\si_{Nk} \le\ol{\si} < \infty,\qquad
|g_{Nk}| \le \ol{g} \qquad\mbox{for all } N \ge2 \mbox{ and } k = 1, \ldots, N.
\]
\end{asmp}

%as3 #&#
\begin{asmp}\label{ass3}
The probability distribution $F$ of the random variable $ \xi$
responsible for splitting companies is supported on the interval $[1/2,
1-\eps_0]$, where $\eps_0 \in( 0, 1/2)$. In other words,
%
%e5 #&#
\begin{equation}
\label{esssup} \xi\backsim F \quad\Ra\quad 1/2 \le\ess\inf\xi\le\ess\sup\xi\le1-
\varepsilon_0.
\end{equation}
\end{asmp}

%as4 #&#
\begin{asmp}\label{ass4} The rule for picking companies to be merged is as follows:
With $N \ge3$, we exclude the company which occupies the highest rank
in terms of capitalization and choose at random two of the remaining $
N-1 $ companies according to the uniform distribution over the
%
%e6 #&#
\begin{equation}
\label{mN} m_N =\pmatrix{N-1
\cr
2}
\end{equation}
possible such choices. If two or more companies are tied in terms of
capitalization, we resolve the tie \textit{lexicographically}, that is,
always in favor of the lowest index $i$.
\end{asmp}

%re2 #&#
\begin{rmk} Under Assumptions \ref{ass2} and \ref{ass4}, mergers are
never suppressed.
Indeed, of the chosen companies, the one with the biggest market weight
will occupy the second place at best, so its market weight will be no
more than $1/2$; whereas the other will occupy the third place at best,
so its market weight will not exceed $1/3$. Therefore, the market
weight of the amalgamated company will not exceed $5/6$, a number
smaller than $1 - \delta$ because we have $ \delta< 1/6 $ from Assumption~\ref{ass2}, \textit{and so the merger will not be suppressed.}
Moreover, \textit{all}
of the new market weights will be bounded away from the threshold $1 -
\delta$, so it will take some time for \textit{any} company extant
after the
merger to hit this threshold.
\label{rmk:never-suppressed}
\end{rmk}

%as5 #&#
\begin{asmp}\label{ass5}
The rates of the exponential clocks satisfy, for some real constants
$c $ and $ \al> 0$,
%
%e7 #&#
\begin{equation}
\label{la} \lambda_N \asymp c N^{\al} ,\qquad N \to\infty.
\end{equation}
\end{asmp}

%
%re3 #&#
\begin{rmk} This condition is perhaps the most significant one: it
ensures that mergers happen with sufficient intensity, so that the
number of companies in the model will not only not become infinite in a
finite amount of time, but will also have a ``tame'' temporal growth
(cf. Proposition~\ref{tails} below).

As an illustration for Condition (\ref{la}), suppose there are $N$
companies; then, according to the rules of Assumption~\ref{ass4},
there are $
m_N $ such possible mergers as in~(\ref{mN}). If each pair of
companies has its own merger exponential clock $ \Xi_i $ with the
same parameter $\lambda$, and if $ \Xi_1, \ldots, \Xi_{m_N}$ are
independent, then the earliest merger will happen at the smallest of
those exponential clocks; but
\[
\min (\Xi_1, \ldots, \Xi_{m_N} ) \backsim
\CE(m_N\lambda) ,
\]
so $\lambda_N = m_N\lambda\asymp N^2$ as $N \to\infty$. That is, the
requirement (\ref{la}) holds in this case with $\al=2$.
\end{rmk}

The following two theorems are our main results. They are proved in
Section~\ref{proofs}.

%th2.1 #&#
\begin{thmm}
\label{thmm1}
Under Assumptions \ref{condition}--\ref{ass5}, the above market model is
free of explosions and
can thus be defined on an infinite time-horizon.
\end{thmm}

%th2.2 #&#
\begin{thmm} Under the Assumptions \ref{condition}--\ref{ass5}, no
relative arbitrage is
possible over any given time horizon $[0,T]$ of finite length.
\label{thmm2}
\end{thmm}

%%%%%%%%%%%%%%%%%%%%%%%%%%%%%
%s3 #&#
\section{Formal construction}
\label{formal}
%%%%%%%%%%%%%%%%%%%%%%%%%%%%%

%s3.1 #&#
\subsection{Notation}
\label{Notation}
We let $\mathbb{N}_0:= \{0, 1, 2, \ldots\}$. For every integer $N \ge
2, \delta\in(0, 1)$, we let
\begin{eqnarray*}
\De_+^N&:=& \bigl\{(z_1, \ldots, z_N) \in
\BR^N\mid z_1>0 , \ldots, z_N > 0 ,
z_1 + \cdots+ z_N = 1 \bigr\} ,
\\
\De_+^{N, \delta}&:=& \bigl\{(z_1, \ldots, z_N) \in
\De_+^N \mid z_1\le 1 - \delta, \ldots, z_N
\le1 - \delta \bigr\}.
\end{eqnarray*}
We also denote
%
%e8 #&#
\begin{eqnarray}
\label{SMM} \mathcal S&:=& \bigcup_{N=2}^{\infty}(0,
\infty)^{N } ,\qquad \widetilde{\mathcal S}:= \bigcup
_{N=3}^{\infty}(0,\infty)^{N } ,
\nonumber
\\[-8pt]
\\[-8pt]
\nonumber
 \mathcal M&:=&
\bigcup_{N=2}^{\infty}\De_+^{N } ,\qquad
\mathcal M^{\delta}:= \bigcup_{N=2}^{\infty}
\De_+^{N, \delta}.
\end{eqnarray}
For $x \in\mathcal{S}$, we denote by $\mathfrak N(x)$ the number of
components of $x$, that is, the integer $ N \ge2 $ for which $ x
\in(0, \infty)^N$; and for $N= \mathfrak N(x)$, we denote by $
\mathfrak{z} (x) \in\De_+^N $ the vector with components
\[
\mathfrak{z}_i (x):= \frac{x_i}{ x_1 + \cdots+ x_N} ,\qquad i=1, \ldots, N.
\]
The market-weight process $\mu(\cdot) = \mathfrak{z}  (X(\cdot
)
)$ of (\ref{weights}) is said to be \textit{on the level $N$} at time $t$,
if $\mu(t) \in\De^{N }_+$.

For any given vector $y \in\BR^N$, we denote by $y_{(1)} \ge y_{(2)}
\ge\cdots\ge y_{(N)}$ its \textit{ranked components}; in this ranking,
ties are resolved lexicographically, always in favor of the lowest
index as in \cite{BFK2005,Ichiba11} and in Assumption~\ref{ass4}.

We denote by $C^r(A)$ the set of $r$ times continuously differentiable %
%\footnote{ I made here the correction found by the referee. (IK)}
functions $f: A \to\BR$.

For every integer $N \ge3 $, we denote by $\mathcal R_N $ the family
of subsets of $\{1, \ldots, N\}$ which contain exactly two elements. We
denote by $\mathscr Q_N$ the collection of all probability
distributions on $\mathcal R_N$.

%re4 #&#
\begin{rmk}
\label{clarify}
Under the Assumption~\ref{ass4}, the probability distributions\break $ \{
\mathcal
P_{\mathfrak N(x)}( x)\}_{x \in\widetilde{\mathcal{S}}} $ in Section~\ref{Info} are constructed thus:
For any given $x \in\widetilde{\mathcal{S,}}$ we let $N=\mathfrak
N(x)$, rank lexicographically the components of the vector $x$, and
consider the smallest index $j \in\{ 1, \ldots, N\}$ such that $x_j
\ge x_k $ holds for all $ k = 1, \ldots, N $. Then $\mathcal P_N( x)
\in\mathscr Q_N$ is the uniform distribution on
the family of subsets of $\{1, \ldots, N\} \setminus\{ j\} $ that
contain exactly two elements [there are exactly $m_N$ such subsets, as
in (\ref{mN})].
\end{rmk}

%%%%%%%%%%%%%%%%
%s3.2 #&#
\subsection{Fixed number of companies}
\label{FNC}
%%%%%%%%%%%%%%%%%%%%%%%

First, let us formally introduce auxiliary CBP-based models with a
fixed, finite number of particles. These will serve as building blocks
for our ultimate model; in \cite{FS2011}, similar preparatory models
are referred to as ``premodels.''

Fix an integer $N \ge2$, and consider a system of $N$ particles moving
on the real line, formally expressed as an %\footnote{ I changed
%``one" to ``an" here; I find it more appropriate. (IK)}
$\BR^N$-valued process
\[
Y (\cdot)= \bigl\{Y(t), 0 \le t < \infty \bigr\} ,\qquad Y(t) = \bigl(Y_1(t),
\ldots, Y_N(t) \bigr)^\prime.
\]
Consider a filtered probability space $ (\Oa, \CF, \MP ),
\mathbb{G}= \{\CG(t)\}_{0 \le t < \infty} $, where the filtration
satisfies the \textit{usual conditions} of right-continuity and
augmentation by null sets, and let $W (\cdot) = \{W(t), 0 \le t <
\infty\}$ be a standard $N$-dimensional $(\mathbb{G} , \MP
)$-Brownian motion.

%de2 #&#
\begin{defn}
\label{CBP}
With $g_1, \ldots, g_N$ given real numbers, and $\si_1, \ldots, \si_N$
given positive real numbers, a \textit{finite system of CBPs with
symmetric collisions} is an $\BR^N$-valued process governed by the
system of stochastic differential equations
\[
\mathrm{d}Y_i(t) = \SL_{k=1}^N \mathbf{
1}_{\{Y_i(t) =
Y_{(k)}(t)\}} \bigl(g_k \,\mathrm{d}t + \si_k
\,\mathrm{d}W_i (t) \bigr) ,\qquad  i = 1, \ldots, N, 0 \le t < \infty.
\]
\end{defn}

Informally, such a model posits that the $k$th largest particle moves
as a one-dimensional Brownian motion with local drift $g_k$ and local
variance $\si_k^2$. We denote the ranked (in decreasing order)
statistics for the components of this system as
%
%e9 #&#
\begin{equation}
\label{ranks} \max_{1 \le i \le N} Y_i (\cdot) =:
Y_{(1)} (\cdot) \ge Y_{(2)} (\cdot ) \ge\cdots\ge
Y_{(N)} (\cdot):= \min_{1 \le i \le N} Y_i (\cdot
).
\end{equation}
Similarly, $\mu_{(k)}(t)$ is the $k$th ranked market weight at time $t
$: namely, $\mu_{(1)}(t) \ge\cdots\ge\mu_{(N)}(t)$.
We set $\La_{(k, k+1)} (\cdot) = \{ \La_{(k, k+1)}(t), 0 \le t <
\infty\}$ for the local time accumulated at the origin by the
nonnegative semimartingales $Y_{(k)} (\cdot)- Y_{(k+1)} (\cdot) =
 \{
Y_{(k)}(t) - Y_{(k+1)}(t), 0 \le t < \infty \}$ with $ k=1,
\ldots, N-1$ (for notational convenience, we set also $\La_{(0,
1)}(\cdot) \equiv\La_{(N, N+1)}(\cdot) \equiv0$ for all $t \in[ 0,
\infty)$). Then the equation
%
%e10 #&#
\begin{eqnarray}
\label{localtimes} \mathrm{d}Y_{(k)}(t) = g_k\,\mathrm{d}t +
\si_k\,\mathrm{d}B_k(t) + \tfrac{1}2 \,\mathrm{d}
\La_{(k, k+1)}(t) - \tfrac{1}2 \,\mathrm{d} \La_{(k-1,
k)}(t) ,
\nonumber
\\[-8pt]
\\[-8pt]
\eqntext{0
\le t < \infty}
\end{eqnarray}
describes the dynamics of the ranked semimartingales in (\ref{ranks}),
where the standard Brownian motions
%
%e11 #&#
\begin{equation}
\label{Levy} B_k (\cdot):= \sum_{i=1}^N
\int_0^{ \cdot}\mathbf{ 1}_{\{
Y_i(t) = Y_{(k)}(t)\}} \,\mathrm{d}
W_i (t) ,\qquad k=1, \ldots, N
\end{equation}
are independent by the P. \textsc{L\'evy} theorem. We refer to \cite
{BFK2005}, \cite{Ichiba11}, Lemma~1 and \cite{IchibaThesis}, Section~3,
for the derivation of (\ref{localtimes}), as well as to \cite{Bass1987}
for the existence and uniqueness in distribution of a weak solution to
the CBP system of Definition~\ref{CBP}. As shown in \cite{IKS2013},
pathwise uniqueness, and thus existence of a strong solution, also hold
for this system up until the first time three particles collide---and
the latter never happens if the mapping $ k \mapsto\si^2_k $ is
concave (cf. \cite{IKS2013,MyOwn5,MyOwn3,IK2010}).

The \textit{CBP-based market model} with a fixed number $N$ of companies
is defined as a collection of $N$ real-valued, strictly positive
stochastic processes
$
X_i (\cdot)=  \{X_i(t), 0 \le t < \infty \} , i = 1, \ldots
, N$ with $ X_i(t):= e^{Y_i(t)} $. The dynamics of these processes
are given by
%
%e12 #&#
\begin{equation}
\label{logX} \mathrm{d} \log X_i(t) = \sum
_{k=1}^N \mathbf{ 1}_{\{X_i(t) =
X_{(k)}(t)\}} \bigl[
g_k \,\mathrm{d}t + \si_k \,\mathrm{d}W_i (t)
\bigr] ,
\end{equation}
or equivalently
%
%e13 #&#
\begin{equation}
\label{X} \frac{ \mathrm{d} X_i(t) }{X_i(t)} = \sum_{k=1}^N
\mathbf{ 1}_{\{X_i(t) = X_{(k)}(t)\}} \biggl[ \biggl( g_k + \frac{ \sigma_k^2
}{2}
\biggr) \,\mathrm{d}t + \si_k \,\mathrm{d}W_i (t) \biggr].
\end{equation}
In this model, the vector process $ \mu(\cdot) =  ( \mu_1 (\cdot
), \ldots, \mu_N (\cdot)  )' = \mathfrak{z}  (X(\cdot)
) $
of the market weights
$
\mu_i (t ):= X_i (t)/  ( X_1 (t) + \cdots+ X_N ( t) ) $ with
$ i=1, \ldots, N , 0 \le t < \infty
$ for its various companies, evolves as an $N$-dimensional diffusion
governed by the system of SDEs
%
%e14 #&#
\begin{eqnarray}
\label{dyna} \mathrm{d}\log\mu_i(t) &=& \biggl[
\SL_{k=1}^Ng_k \mathbf{ 1}_{\{
\mu
_i(t) = \mu_{(k)}(t)\}} -
\SL_{k=1}^N g_k \mu_{(k)} (t) \nonumber\\
&&{}-
\frac{1}2\SL_{k=1}^N \si_k^2
\bigl(\mu_{(k)}(t) - \mu_{(k)}^2(t) \bigr)
\biggr]\,\mathrm{d}t
\nonumber\\
&&{} + \SL_{k=1}^N \si_k \bigl[ \mathbf{
1}_{\{\mu_i(t) = \mu_{(k)}(t)\}} \,\mathrm{d}W_i(t) \\
&&{}- \mu_{(k)} (t)
\SL_{\nu= 1}^N \mathbf{ 1}_{\{\mu_{\nu}(t) = \mu
_{(k)}(t)\}}\,\mathrm{d}W_\nu(t)
\bigr],\nonumber\\
\eqntext{i=1, \ldots, N.}
\end{eqnarray}

We derive this system from the general expression in equation (2.4) of
\cite{FK2009}, Section~2. Substituting in that expression the concrete
values of drift and covariance coefficients for the CBP-based market
model of (\ref{logX}) under consideration, we arrive at the dynamics of
(\ref{dyna}) for the $ \log\mu_{i}(\cdot)$'s.

%re5 #&#
\begin{rmk}
In the terminology of \cite{FS2011}, Remark~2, the companies in models
of this sort are ``generic'': The characteristics of their
capitalizations' dynamics depend entirely on the ranks the companies
occupy in the capitalization hierarchy; they are not idiosyncratic
(i.e., name- or sector-dependent).
\end{rmk}

%%%%%%%%%%%%
%s3.3 #&#
\subsection{Formal construction of the main model}
\label{formcon}
%%%%%%%%%%%%%%%%%%%%%%%%%%

Let us begin the formal construction of our model. This will take the
form of a process $X (\cdot)=  \{X(t), 0 \le t < \infty \}$ on
the state-space $\mathcal{S}$ of (\ref{SMM}).

For every $N \ge2$ and every $x = (x_1, \ldots, x_N)' \in(0, \infty
)^N$, we construct a probability space $(\Oa^{N, x}, \CF^{N, x}, \MP
^{N, x})$ which contains countably many i.i.d. % \footnote{ Here and
%below, I changed ``i.i.d." to ``I.I.D." (IK)}
copies $ Y^{N, x, n} (\cdot) , n \in\mathbb{N} $ of the solution
\[
Y^{N, x} (\cdot) = \bigl\{Y^{N, x}(t), 0 \le t < \infty \bigr\}
, \qquad Y^{N, x}(t) = \bigl(Y_1^{N, x}(t), \ldots,
Y_N^{N,
x}(t) \bigr)^\prime
\]
to the following system of stochastic differential equations:
%
%e15 #&#
\begin{eqnarray}
\label{fixed} \mathrm{d}Y_i^{ N, x }(t) =
\SL_{k=1}^N\mathbf{ 1}_{\{ Y_i^{N, x}(t) =
Y_{(k)}^{N, x}(t)\}} \bigl(
g_{Nk} \,\mathrm{d}t + \si_{Nk} \,\mathrm{d}W_i(t)
\bigr) ,
\nonumber
\\[-8pt]
\\[-8pt]
\eqntext{ Y^{N, x}_i(0) = \log x_i}
\end{eqnarray}
for $ i=1, \ldots, N $. Here $ W(\cdot) = \{W(t), t \ge0\}$ is a
standard $N$-dimensional Brownian motion, and the parameters $ g_{Nk}
$, $ \si_{Nk} $ satisfy the conditions of Assumptions \ref{condition} and
\ref{ass2}.

For every $N \ge3$, we fix a collection of probability distributions\break 
$ \{\mathcal P_N( x)\}_{x \in\widetilde{\mathcal{S}}} \subseteq
\mathscr Q_N $. This specification will provide the rule for choosing
two out of the existing $N = \mathfrak N(x)$ companies to merge, when
the system is in state $ x \in\widetilde{\mathcal{S}} $ and an
exponential clock rings.

\begin{itemize}
\item Consider another probability space $(\Oa', \CF', \MP')$ which contains:
\begin{longlist}[(a)]
\item[(a)] countably many countably many i.i.d. copies $\xi_1, \xi_2, \xi_3,
\ldots$ of a random variable $ \xi$ with given probability
distribution $F$, which is supported on the interval $[1/2, 1-\eps_0]$;

\item[(b)] for each $N \ge2$, countably many copies $\eta^N_1, \eta^N_2,
\ldots$ of an exponential clock $\eta^N$ with rate $\lambda_N$, if this
rate is positive (if the rate is zero, as we assume it is for $N=2$, we
let $\eta^N_1 = \eta^N_2 = \cdots= \infty$);

\item[(c)] for each $N \ge2$ and each probability distribution $\mathfrak p
\in\mathscr Q_N$, countably many i.i.d. copies $\zeta_i(N,
\mathfrak p), i = 1, 2, \ldots$ of a random element $\zeta(N,
\mathfrak p) $, which takes values in $\mathcal R_N$
and is distributed according to $ \mathfrak p $. (Please recall here
the notation of Section~\ref{Notation}.)
\end{longlist}
\item
Let $(\Oa, \CF, \MP)$ be the direct product of these probability
spaces. Starting from a point $ X(0)= x \in\mathcal{S}$, we let $N_0
= \mathfrak N(x)$ be the number of companies extant at $t=0$, and
construct a process $X (\cdot) =  \{X(t), 0 \le t < T  \}$ and
a random time $T $, the ``lifetime'' of $X(\cdot)$, as follows:
\begin{longlist}[(iii)]
\item[Step (i):] For $t \le\tau_1\wedge\eta_1^{N_0}$, we define the
random vector $X(t)= (X_1 (t), \ldots,\break   X_{N_0} (t))^\prime$ with
values in $ (0, \infty)^{N_0}$ as
%
%e16 #&#
\begin{eqnarray}
\label{T1} X_i(t)&:=& \exp \bigl(Y_i^{N_0, x, 1}(t)
\bigr)\quad \mbox{and}
\nonumber
\\[-8pt]
\\[-8pt]
\nonumber
 \tau _1&:= &\inf \bigl\{ t \ge0\mid\exists i = 1,
\ldots, N_0: \mu ^{N_0, x}_i(t) = 1 - \delta \bigr
\}
\end{eqnarray}
(we adopt here the usual convention $\inf\varnothing= \infty$), where
the market weight of the company $i$ is
%
%e17 #&#
\begin{eqnarray}
\label{mu} \mu_i (t) \equiv\mu_i^{N_0, x}(t):
= \frac{X_i(t)}{ X_1(t)+ \cdots+
X_{N_0} (t) } ,
\nonumber
\\[-8pt]
\\[-8pt]
\eqntext{0 \le t \le\tau_1\wedge\eta_1^{N_0};
i = 1, \ldots, N_0.}
\end{eqnarray}
Since $\delta\in(0, 1/2)$, there can be at most one index $i$ with $
\mu
^{N_0, x}_i(\tau_1) = 1 - \delta $. Thus, the moment of the first jump,
or ``event,'' in this setup, is
\[
T_1:= \tau_1\wedge\eta_1^{N_0}.
\]

\item[Step (ii):] If $ \tau_1 \le\eta^{N_0 }_1, \tau_1<\infty$, we
pick the unique $ i \in\{ 1, \ldots, N_0 \} $ such that $\mu_i(\tau
_1) = 1 - \delta$, and define the vector $X(\tau_1 +) \in(0, \infty
)^{N_0+1}$ as follows:
\begin{eqnarray*}
X_\nu(\tau_1 +) &=& X_\nu(\tau_1
),\qquad \nu= 1, \ldots, i - 1;\\
 X_\nu(\tau_1 +) &=&
X_{\nu+1}(\tau_1 ),\qquad \nu= i, \ldots, N_0-1;
\\
X_{N_0}(\tau_1+) &=& \xi_1 X_i(
\tau_1 ) ,\qquad X_{N_0+1}(\tau_1+) = (1 -
\xi_1)X_i(\tau_1 ).
\end{eqnarray*}

To wit: at the time $ \tau_1 $ of (\ref{T1}), company $ i $ is
split into two new companies, anointed with the names $ N_0 $ and $
N_0 +1 $. These inherit the capitalization $ X_i(\tau_1 ) $ of
their progenitor in proportions $ \xi_1 $ and $ 1 - \xi_1 $,
respectively. Companies $ 1, \ldots, i-1 $ keep both their names and
their capitalizations; whereas the companies formerly known as $ i+1,
\ldots, N_0 $ keep their capitalizations but change their names to $
i , \ldots, N_0 -1 $, respectively.

\item[Step (iii):] If $\tau_1 > \eta^{N_0}_1$, a subset with two
elements $\{i, j\} \subseteq\{1, \ldots, N_0\}$ is selected according
to the random variable $\zeta_1  (N_0, \mathcal P_{N_0}(X(\eta
^{N_0}_1)) )$ whose distribution is $ \mathcal{P}_{N_0}(X(\eta
^{N_0}_1)) \in\mathscr{Q}_{N_0} $.

On the event $  \{\mu_i(\eta_1^{N_0}) + \mu_j(\eta_1^{N_0}) \ge1
- \delta \}$, we proceed to step (iv), case B below. Otherwise, we
define the vector $X(\eta^{N_0}_1 +) \in(0, \infty)^{N_0-1}$ as
follows, say with $i < j $:
\begin{eqnarray*}
X_\nu\bigl(\eta^{N_0}_1 +\bigr) &=&
X_\nu\bigl(\eta^{N_0}_1 \bigr),\qquad \nu= 1, \ldots,
i - 1 ;\\
 X_\nu\bigl(\eta^{N_0}_1 +\bigr) &=&
X_{\nu+1}\bigl(\eta^{N_0}_1 \bigr),\qquad \nu= i, \ldots,
j - 2;
\\
X_\nu \bigl(\eta^{N_0}_1 + \bigr) &=&
X_{\nu+2} \bigl(\eta^{N_0}_1 \bigr),\qquad \nu = j-1,
\ldots, N_0-2;\\
 X_{N_0-1} \bigl(\eta^{N_0}_1
+ \bigr) &=& X_i \bigl(\eta^{N_0}_1 \bigr) +
X_j \bigl(\eta^{N_0}_1 \bigr).
\end{eqnarray*}

Once again, companies $ 1, \ldots, i-1 $ keep both their names and
their capitalizations. The erstwhile companies $ i+1, \ldots, j-1 $
keep their capitalizations but change their names to $ i , \ldots,
j-2 $; whereas the erstwhile companies $ j+1, \ldots, N_0 $ keep
their capitalizations but change their names to $ j-1 , \ldots,
N_0-2 $. The former companies $ i $ and $ j $ merge; they create a
new company, anointed with the index (name) $ N_0 -1 $, which
inherits the sum total of their capitalizations.\vspace*{6pt}

\item[Step (iv):] We let $N_1 = \CN( T_1+ )$ be the new number of
companies extant right after time $T_1= \tau_1 \wedge\eta_1^{N_0} $,
and note that there are three possibilities:

\textit{Case} A: $ N_1 =
N_0+1 $ on the event $ \{ \tau_1 \le\eta^{N_0}_1, \tau_1<\infty\}
$ of a split;

\textit{Case} B: $ N_1 = N_0 $ on the event $ \{
\tau_1 > \eta^{N_0}_1, \mu_i(\eta_1^{N_0}) + \mu_j(\eta_1^{N_0})
\ge
1 - \delta\}$ of a ``suppressed'' merger;

\textit{Case} C: $ N_1 = N_0 -1 $ on the event $ \{ \tau_1 > \eta
^{N_0}_1, \mu_i(\eta_1^{N_0}) + \mu_j(\eta_1^{N_0}) < 1 - \delta\}
$ of
a ``successful'' merger.

We define
%
%e18 #&#
\begin{equation}
\label{T2} X_i(t):= \exp \bigl(Y_i^{N_1, x_1, 2}
(t - T_1 ) \bigr) \qquad\mbox{for } T_1 < t \le
T_1 + \bigl( \tau_2 \wedge\eta^{N_1}_2
\bigr).
\end{equation}
Here, $T_1= \tau_1 \wedge\eta_1^{N_0} , x_1 = X(T_1)$, and
\[
\tau_2:= \inf \bigl\{t > 0 \mid\exists i = 1, \ldots,
N_1: \mu ^{N_1, x_1}_i(T_1+ t) = 1 -
\delta \bigr\} ,
\]
where $\mu^{N_1, x_1}_i(t)$ is defined by analogy with \eqref{mu}, in
terms of the capitalizations in (\ref{T2}), as
\begin{eqnarray}
\mu_i (t) \equiv\mu_i^{N_1, x_1}(t):=
\frac{X_i(t)}{ X_1(t)+ \cdots
+ X_{N_1} (t) } ,
\nonumber
\\
\eqntext{T_1 < t \le T_1 + \bigl(
\tau_2 \wedge\eta^{N_1}_2 \bigr) , i = 1, \ldots,
N_1.}
\end{eqnarray}
Thus, the time of the second jump in the integer-valued process $
\mathcal{N} (\cdot) $ is
\[
T_2:= T_1 + \bigl( \tau_2 \wedge
\eta^{N_1}_2 \bigr).
\]

\item[Step (v):] We similarly define  the values of the capitalization
processes after the second jump. On each next step, we use new
independent copies of variables $\eta^N_i$ and $\xi_i$. [If this jump
corresponds to a merger, then we choose two companies to be merged
according to the distribution $\zeta_2(N_1,\break  \mathcal P_{N_1}(X(\eta
^{N_1}_2)))$.] Then we define their evolution until the moment $T_3$ of
the third jump, etc.

%re6 #&#
\begin{rmk}
We already saw in Remark~\ref{rmk:never-suppressed} that the
specification of the probability distributions $ \{\mathcal
P_{\mathfrak N(x)}( x)\}_{x \in\widetilde{\mathcal{S}}} $ as in
Assumption~\ref{ass4} and Remark~\ref{clarify} guarantees that, with
three or
more companies present, no merger is ever suppressed [i.e., that case B
in (iv) above never occurs].
\end{rmk}

\item[Step (vi):] This construction leads to a strong \textsc{Markov}
process $X (\cdot)=  \{ X(t),  0 \le t < T_\star \}$ with state
space $\mathcal{S}$ and piecewise-continuous, LCRL (Left-Continuous
with Right Limits) paths, defined on the time interval $[0, T_\star)$ with
%
%e19 #&#
\begin{equation}
\label{expl} T_\star:= \lim_{m \to\infty} \uparrow
T_m.
\end{equation}
\end{longlist}
\end{itemize}

The resulting market-weight process $\mu(\cdot) = \mathfrak{z}
(X(\cdot) ) $ of (\ref{weights}) has state space $\CM^{\delta}$
as in
(\ref{SMM}). In particular, $ \max_{1 \le i \le\mathcal{N} (t)} \mu
_i (t) \le1 - \delta$ holds for all $ t \in[0, \infty) $, so the
resulting market model is \textit{diverse} in the terminology of \cite
{F2002}, Chapter~2. We also note that, in all cases of the above
construction, the total capitalization $ \mathcal{C}(\cdot) $ in
(\ref
{weights}) is preserved at each ``event-time'' $ T_m $, namely
$
\mathcal{C}(T_m +) = \mathcal{C}(T_m ) , \forall m \in
\mathbb{N} $.

%de3 #&#
\begin{defn}
\label{EXP}
We say that the so-constructed model \textit{admits explosions}, if $
\MP( T_\star= \infty) <1 $. Otherwise, the model is said to be
\textit{free of explosions}.
\end{defn}

Theorem~\ref{thmm1} guarantees that $ \MP( T_\star= \infty) =1
$ holds under Assumptions \ref{condition}--\ref{ass5}. In the absence of
explosions, the
process $X (\cdot) $ is defined on all of $[0, \infty)$, and we denote
by $ \mathbb{F} = \{ \mathcal{F} (t) \}_{0 \le t < \infty} $ the
smallest filtration to which $X(\cdot)$ is adapted and which satisfies
the usual conditions.

%re7 #&#
\begin{rmk}
In addition to being diverse, the model just constructed has intrinsic
relative variance (equivalently, excess growth rate for market
portfolio) which is bounded away from zero, namely
\begin{eqnarray}
\gamma^\mu_* (t):= \frac{1}2 \Biggl( \sum
_{k=1}^{N} \sigma_{N
k}^2
\mu_{(k)} (t) \bigl( 1-\mu_{(k)} (t) \bigr) \Biggr)
\bigg|_{N=\mathcal{N}(t)} \ge \bigl( \si^2_0 \delta \bigr) / 2 >
0 ;\nonumber\\
\eqntext{ 0 \le t < \infty.}
\end{eqnarray}
We owe this observation to Dr. Robert \textsc{Fernholz}. See
Proposition~3.1 in \cite{FK2005}, or Example~11.1 in \cite{FK2009}, for
the significance of such a positive lower bound in the context of
arbitrage relative to the market portfolio with a fixed number of companies.
\end{rmk}

%%%%%%%%%%%%%%%%%%%%%%%
%s3.4 #&#
\subsection{Portfolios and associated wealth processes}
%%%%%%%%%%%%%%%%%%%%%%%

Let us discuss portfolios and the wealth processes they generate. A
\textit{portfolio} $ \pi(\cdot) $ is an $ \mathbb
{F}$-progressively measurable process
$
\pi(\cdot)=  \{ \pi(t), 0 \le t < \infty \} , \pi(t) =
 (\pi_1(t), \ldots, \pi_{\CN(t)}(t) )^\prime$ for which there
exists some real constant $K_\pi\ge0$ such that, almost surely: $
|\pi_i(t)| \le K_\pi$ holds for all $0 \le t < \infty$ and $ i =
1, \ldots, \CN(t)$.

When the integer-valued process $ \CN(\cdot) $ suffers a downward or
upward jump, this portfolio must behave according to the rules
described informally in Section~\ref{Info}. We formalize these rules presently.
\begin{longlist}[(A)]
\item[(A)] Assume that at time $t$, the $i$th and $j$th companies merge
into one company; this is then anointed with index $N-1$, where $N =
\CN
(t )$ %.\footnote{$t+$ changed to $t$.}
is the number of companies immediately before the merger. The new
portfolio weights are
\begin{eqnarray*}
\pi_\nu(t+) &=& \pi_\nu(t ),\qquad \nu= 1, \ldots, i-1;\\
\pi_\nu (t +) &=& \pi_{\nu+1}(t ),\qquad \nu= i, \ldots, j-2;
\\
\pi_\nu(t+) &=& \pi_{\nu+2}(t ), \qquad\nu= j-1, \ldots, N - 2;\qquad
\pi_{N -
1}(t+) = \pi_i(t ) + \pi_j(t ).
\end{eqnarray*}
In words: companies not involved in the merger are assigned the same
portfolio weights, under their new appellations if necessary; whereas
the newly minted company $ N-1 $ inherits the sum of the portfolio
weights formerly assigned to its two parent companies.

\item[(B)] Assume that at time $t$ the $i$th company, with
capitalization $X_i(t )$, is split into two companies [anointed with
the indices $N$ and $N+1$, where $N = \mathcal{N} (t)$ % \footnote{ I
%changed ``$t+$" to ``$t$"; I think this is the right interpretation.
%Please check. (IK)}
is the number of companies immediately before the split]. The new
portfolio weights are
\begin{eqnarray*}
\pi_\nu(t+) &=& \pi_\nu(t ),\qquad \nu= 1, \ldots, i-1;\\
 \pi
_\nu(t+) &=& \pi_{\nu+1}(t ), \qquad\nu= i, \ldots, N-1;
\\
\pi_{N}(t+)& =& \pi_i(t )\frac{X_N(t+)}{X_i(t )} ,\qquad \pi
_{N+1}(t+) = \pi_i(t )\frac{X_{N+1}(t+)}{X_i(t )}.
\end{eqnarray*}
Once again, companies not involved in the split keep their weights in
the portfolio, under their new appellations if necessary; whereas each
of the two newly created companies $ N $ and $ N+1 $ inherits the
weight in the portfolio of the parent company, in proportion to its
currently assigned capitalization.
\end{longlist}

The corresponding wealth process $V^{\pi} (\cdot) = \{V^{\pi}(t), 0
\le t < \infty\}$ is continuous: \textit{it does not suffer a jump
when a
split or a merger happen.} It is $ \mathbb{F}$-adapted, takes %
%\footnote{ I changed ``has" to ``takes", find it more appropriate.
%(IK)}
values in $ (0, \infty) $, and is governed for each integer $ m \in
\mathbb{N}_0 $ by the dynamics
%
%e20 #&#
\begin{equation}
\label{wealthSDE} \frac{\mathrm{d}V^{\pi}(t)}{V^{\pi}(t)} = \SL_{i=1}^{N_m}
\pi_i (t) \frac{\mathrm{d}X_i(t)}{X_i(t)} ,\qquad t \in(T_m,
T_{m+1}) , V^\pi(0) =1
\end{equation}
and with $T_0=0$. Quite clearly, we have $ V^\kappa(\cdot) \equiv1
$ for the cash portfolio; and $ V^\mu(\cdot) \equiv\mathcal
{C}(\cdot
) / \mathcal{C}(0) $ for the market portfolio of (\ref{weights}). As
mentioned before, the amount $
\pi_0(t)V^{\pi}(t) $ in the notation of (\ref{MM}) is invested in
the money market at time~$t$.

%%%%%%%%%%%%%%%%%%%%%%%%%%
%s4 #&#
\section{Proofs}
%%%%%%%%%%%%%%%%%%%%%%%%%%
\label{proofs}

%s4.1 #&#
\subsection{Subexponential tail}\label{sec4.1}

We state and prove the following crucial proposition. This result
postulates that the distribution of the maximum number of companies
over any finite time-interval has a tail which is lighter than that of
any exponential distribution.

%pr4.1 #&#
\begin{prop}
\label{tails}
Under Assumptions \ref{condition}--\ref{ass5}, for any $T \in( 0, \infty
)$, we have
\[
\lim_{u \to\infty} \frac{1}u \Bigl[-\log \MP_x
\Bigl( \max_{0 \le t \le T}\CN(t) > u \Bigr) \Bigr] = \infty.
\]
In particular, %\footnote{ I suggest we replace ``Equivalently" by
%``In particular". Please check. (IK)}
for all $ c \in( 0, \infty) $ we have
%
%e21 #&#
\begin{equation}
\label{subexp} \mathbf E_x \Bigl[ \exp \Bigl(c\max
_{0 \le t \le T}\CN(t) \Bigr) \Bigr] < \infty.
\end{equation}
%
%\label{exptail}
\end{prop}

Theorem~\ref{thmm1} follows directly from this proposition: If the
maximal number of companies over the time-horizon $[0, T]$ has this
property, then it is a.s. finite, which is another way of saying
that the counting process $ \mathcal{N} (\cdot) $ does not explode.
Theorem~\ref{thmm2} also uses this fact, but in subtler ways; its proof
is postponed until Section~\ref{proof_thmm2}.

The rest of this section is organized as follows. In Section~\ref
{sec: overview}, we
explain the main idea of the proof of Proposition~\ref{tails}. In
Section~\ref{Prelims}, we derive some preliminary estimates and lay the
groundwork for the rest of the proof. In Section~\ref{proof_Prop}, we
carry out
the proof of Proposition~\ref{tails} in full detail. In Section~\ref{proof_thmm2},
we use this Proposition to prove Theorem~\ref{thmm2}.

%s4.2 #&#
\subsection{Overview of the proof of Proposition \texorpdfstring{\protect\ref{tails}}{4.1}}
\label{sec: overview}

We shall use the following notation: Consider the random sequence $\{
N_m\}_{m \in\mathbb{N}_0} $ with $N_m = \mathcal{N} (T_m +)$, and
the sequence of ``event-times''\vadjust{\goodbreak} $\{ T_m\}_{m \in\mathbb{N}_0} $, as
in (\ref{expl}) and (\ref{wealthSDE}). The quantity $N_m$ is the level
of the process $\mu(\cdot)$ of market weights; in other words, the
number of companies extant during the time interval $(T_m, T_{m+1})$
between the $m$th and the $(m+1)$st jumps of the integer-valued process
$ \mathcal{N}(\cdot) $.

The idea of the proof of Proposition~\ref{tails} is as follows. We say
that a \textit{double jump upward} happens at the
step $m$, if $N_{m} = N_{m-1}+1$ and $N_{m+1} = N_m+1$. If $N_m = N$,
then we say that this is a \textit{double jump upward from level $N-1$ to
level $N+1$} at step $m$; we denote this event by
%
%e22 #&#
\begin{equation}
\label{eq:double-jump} A(m, N):= \{N_{m+1} = N_m + 1,
N_m = N_{m-1} + 1\}.
\end{equation}
Suppose we start from the level $N$, and the maximal number of
companies during the time interval $[0, T]$ is larger than or equal to
$2L$, where $L > N$ is some large number. Then it takes time less than
$T$ to get from the level $L$ to the level $2L$; this will require at
least $L-1$ double jumps upward, for instance, one from $L$ to $L+2$,
another from $L+1$ to $L+3$, etc. Note that such double jumps upward
may ``overlap,'' when there are three or more consecutive jumps upward.

The crucial part in the proof of Proposition~\ref{tails} is to show
that the probability of a double jump upward from $N-1$ to $N+1$ is
small for large $N$. This is done in Lemmas \ref{lemma:double-jump-1},
\ref{lemma:estimation-of-A} and \ref{lemma:double-jump-2}.

Indeed, as we shall see in Lemma~\ref{lemma:after-an-upward-jump},
immediately after the first upward jump, from $N-1$ to $N$, the top
market weight will be less than or equal to $1 - \delta_0$,
where
%
%e23 #&#
\begin{equation}
\label{delta0} \delta_0:= 1 - (1 - \delta) (1 - \eps_0)
> \delta.
\end{equation}
In other words, the process $\mu(\cdot)$ of market weights will ``stay
away'' from the threshold $1 - \delta$, which it must hit before the
exponential clock $\CE(\lambda_{N})$ rings, for a double jump at
level $N$
to happen. But from Lemma~\ref{lemma} in the \hyperref
[app]{Appendix}, the probability
of this event is at most
%
%e24 #&#
\begin{eqnarray}
\label{piN} p_N:= 2 \biggl(\frac{( 1 - \delta_0) \vee(1/2)}{1 - \delta}
\biggr)^{\lambda
_N^{1/2}/ \ol{\si}}= 2\exp \bigl(-\al_0\lambda_N^{1/2}
\bigr)
\nonumber
\\[-8pt]
\\[-8pt]
\eqntext{ \mbox{where } \displaystyle\al_0:= \frac{1}{ \ol{\si} } \log \biggl(
\frac{1 - \delta}{( 1 -
\delta_0) \vee(1/2)} \biggr).}
\end{eqnarray}

Then we fix the number of steps $u$ and claim that it is unlikely for
the process to perform % \footnote{ Should we change ``perform" to
%``experience" ? (IK) Probably, not. (AS)}
$L-1$ double jumps upward within $u$ steps. But if the process gets to
the level $2L$ in $M \ge u$ steps, then there will be a lot of jumps
downward, at independent exponential random times. Since there will be
a lot of these random times, we can apply the large deviation theory
and argue that their sum is very likely to be greater than $T$.\vadjust{\goodbreak}

%%%%%%%%%%%%%%%%%%%%%%%%%
%s4.3 #&#
\subsection{Preliminary remarks and estimates}
\label{Prelims}
%%%%%%%%%%%%%%%%%%%%%%%%

%s4.3.1 #&#
\subsubsection{Leaving a given level}

The process $\mu(\cdot) = \mathfrak{z}  (X(\cdot) )$ of market
weights evolves in the following manner: As long as it stays on the
$(N-1)$-dimensional manifold $\De^{N, \delta}_+$ (i.e., ``on the level
$N$''), the process $\mu(\cdot)$ evolves as an $N$-dimensional diffusion
governed by the system (\ref{dyna}) of SDEs.

\textit{How does the process $\mu(\cdot)$ leave the level $N$}?
There are
two possibilities:
\begin{longlist}[(ii)]
\item[(i)] An exponential clock $\eta^N_j \backsim\CE(\lambda_N)$ rings; by
construction, the random variable $\eta^N_j$ is independent of the
diffusion given by the system of SDEs in (\ref{dyna}). Then we choose
randomly two companies to merge, \textit{excluding the top one}. As
mentioned in Remark~\ref{rmk:never-suppressed}, this requirement is
essential for ensuring than the merger will not be suppressed [i.e.,
with this proviso we never find ourselves in case B(iv) of
Section~\ref{formcon}].

\item[(ii)] The market weight of one of these $N$ companies, say of the $i$th
one, hits at some time $\tau$ the level $ 1 - \delta$; thus $ \mu_i
(\tau)= 1 - \delta$ and $\sum_{j \neq i} \mu_j (\tau)=\delta$.
Then we
pick a random variable $\xi\backsim F$, independent of the past, and
split the $i$th company into two new companies: these are assigned
market weights $\xi\mu_i(\tau)$ and $(1 - \xi)\mu_{i}(\tau)$.
\end{longlist}

Since $1/2 \le\xi\le1 - \eps_0$ from Assumption~\ref{ass3}, each
of the
resulting two new market weights is at most $(1 - \delta)(1 - \eps_0)
= 1
- \delta_0$ as in (\ref{delta0}); whereas all the other companies,
those unaffected by the split, have market weights bounded from above
by $\delta$. Because $ \eps_0 \in(0, 1/2)$ and $\delta\in(0,
1/6)$, we
have $ \delta< (1 - \delta)/2 < (1 - \delta)(1 - \eps_0)= 1 -
\delta_0 < 1 - \delta
$, so again \textit{all} of the new market weights are bounded away from
the threshold $1 - \delta$.

Let us state this observation in the form of a separate lemma.

%le4.2 #&#
\begin{lemma}
For the time $ \taub$ of any upward jump in the process $ \mathcal
{N} (\cdot) $ and with $ \delta_0 $ as in \eqref{delta0}, we have
\[
\mu(\taub+) \in\De^{N^\star, \delta_0}_+\qquad \mbox{for some integer } N^\star
\ge2.
\]
In other words, immediately after any upward jump, the market weight
process is in $\CM^{\delta_0}$ and $ \mu_{(1)}(\taub+ ) \le1 -
\delta_0
$ holds.
\label{lemma:after-an-upward-jump}
\end{lemma}

%%%%%%%%%%%%%%%%%%%%
%s4.3.2 #&#
\subsubsection{Jumping upward, rather than downward}
%%%%%%%%%%%%%%%%%%%%%%%%%%%%%%

Let us obtain an upper bound for the conditional probability $\MP
_x
( \tau_m \le\eta^{N_{m-1}}_m | N_{m-1} = N  )$ that the $m$th
jump will be upward rather than downward, given that during the
time-interval $ (T_{m-1}, T_m ) $ the market weight process $ \mu
(\cdot) = \mathfrak{z}  (X(\cdot) ) $ is at a given level $ N
\in\mathbb{N} $.

On the event $ \{ N_{m-1} =N\} $ and during the time-interval $
(T_{m-1}, T_m ) $ with the ``event-time'' $ T_m = T_{m-1} + ( \tau_m
\wedge\eta^{N }_m) $, the process of log-capitalizations evolves as
a system of competing Brownian particles with drifts $g_k = g_{N k}$
and variances $\si_k^2 = \si_{N k}^2 $, for $ k = 1, \ldots, N $.

First, we consider $m = 1$; by the comparison lemma from the \hyperref
[app]{Appendix},
we get
\[
\MP_x \bigl(\tau_1 \le\eta_1^{N_0}
\mid N_0 = N \bigr) = \MP _x \bigl(\tau _1
\le\eta_2^{N_0} \bigr) \le \biggl(\frac{\mu_{(1)}(0)\vee(1/2)
}{1 -
\delta}
\biggr)^{\lambda_{N }^{1/2} / \ol{\si}} ,
\]
because in the notation of Assumption~\ref{ass2}, we have $\ol{\sigma
} \ge
\widetilde{\si}:= \max_{1 \le k \le N}(\si_{Nk})$. Now, back to the
case of general $m$, the strong Markov property gives
%
%e25 #&#
\begin{eqnarray}
\label{estimate} \MP_x \bigl(\tau_{m+1} \le
\eta^{N_{m }}_{m+1} | N_{m } = N \bigr)& =&
\ME_x \bigl( \MP_{X(T_{m })} \bigl(\tau_1 \le
\eta^{N}_1 \bigr) \bigr)
\nonumber
\\[-8pt]
\\[-8pt]
\nonumber
&\le& 2 \ME_x \biggl(
\frac{\mu_{(1)}(T_{m }+)\vee(1/2)}{1 - \delta
} \biggr)^{\lambda
_{N }^{1/2} / \ol{\si}}.
\end{eqnarray}

%%%%%%%%%%%%%%%%%%%%%%%%
%s4.3.3 #&#
\subsubsection{A couple of auxiliary estimates}
%%%%%%%%%%%%%%%%%%%%%%%%

In this subsection, we estimate the probability of a double jump from
$N-1$ to $N+1$, and then show that this estimate is in some sense
independent of the past: we can condition on an event which occurred
before the first of these tandem jumps was completed (the jump from
$N-1$ to $N$). Although this conditioning might influence the exact
probability of the double jump, the estimate which is deduced in this
subsection remains unchanged.

We recall the definition of $A(m, N)$ from \eqref{eq:double-jump}.

%le4.3 #&#
\begin{lemma}
\label{lemma:double-jump-1}
Fix $ m \ge1, N \ge3$. Then, with $p_N$ as in \eqref{piN}, we have
the following estimate for the probability of a double upward jump:
\begin{eqnarray*}
\MP_x \bigl(A(m, N)\mid N_{m-1} = N-1\bigr) &\equiv&
\MP_x(N_{m+1} = N+1\mid N_m = N,
N_{m-1} = N-1 ) \\
&\le& p_N.
\end{eqnarray*}
\end{lemma}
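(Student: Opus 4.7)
The plan is to condition on the history up through the $m$th jump and then show that, given $N_{m-1} = N-1$ and $N_m = N$, the event that the $(m+1)$st jump is \emph{also} upward has conditional probability at most $p_N$, by the same estimate \eqref{estimate} that produced the bound (\ref{piN}) in the introductory paragraph.

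First I would observe that the event $\{N_{m-1}=N-1,\ N_m=N\}$ forces the $m$th jump to be an upward jump, i.e.\ a split: a merger decreases $\mathcal N$ by one (or leaves it unchanged, in the case of a suppressed merger, but by Remark on Subsection~\ref{Prelims} this case never occurs at levels $\ge 3$ under Assumption~4). Hence at time $T_m+$ we are in the ``immediately after a split'' configuration analyzed in Subsection~\ref{Prelims}(ii): the two new market weights each satisfy $\mu \le (1-\de)(1-\eps_0)$, while every unaffected weight is $\le \de$, so
$$
\mu_{(1)}(T_m+)\,\le\,1-\de_0, \qquad \de_0 := 1 - (1-\de)(1-\eps_0) > \de.
$$

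Next, I would apply the strong Markov property at $T_m$. On the event $\{N_m = N\}$, the process of log-capitalizations during the interval $(T_m, T_{m+1})$ evolves as a finite CBP-based system with $N$ particles, parameters $g_{Nk}, \si_{Nk}$, and the clock $\eta_{m+1}^{N}\backsim \CE(\la_N)$ is independent of this diffusion. The event $A(m,N)$ occurs precisely when $\tau_{m+1}\le \eta_{m+1}^N$, i.e.\ when some market weight reaches $1-\de$ before the merger clock rings. Applying the bound \eqref{estimate} conditionally and using $\mu_{(1)}(T_m+)\vee \tfrac12 \le (1-\de_0)\vee \tfrac12$, we get
$$
\MP\big(N_{m+1}=N+1\mid N_m=N,\ N_{m-1}=N-1,\ \CF(T_m)\big) \,\le\, 2\left(\frac{(1-\de_0)\vee \tfrac12}{1-\de}\right)^{\si^{-1}\la_N^{1/2}} = \,p_N,
$$
by the very definition of $p_N$ in (\ref{piN}). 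Taking conditional expectation with respect to $\{N_{m-1}=N-1\}$ then yields the claim.

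The only subtlety is making sure the appeal to the comparison lemma in the Appendix (the one underlying \eqref{estimate}) is legitimate in this conditional context. It is, because after conditioning on $\mathcal F(T_m)$ the post-$T_m$ dynamics coincides in law with a fresh premodel started from a point with top market weight $\le 1-\de_0$, and the bound in the Appendix depends on the starting configuration only through that top weight and through $\si_0,\si$ and $\la_N$. So no extra work is needed beyond invoking the strong Markov property and the deterministic post-split bound on $\mu_{(1)}$; this should be the main (and really the only) technical point to verify carefully.
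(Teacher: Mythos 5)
Your proposal is correct and follows essentially the same route as the paper: the event $\{N_{m-1}=N-1,\,N_m=N\}$ forces the $m$th jump to be a split, whence $\mu_{(1)}(T_m+)\le 1-\de_0$, and the estimate \eqref{estimate} (via the strong Markov property and the Appendix comparison lemma) then gives the bound $p_N$ of \eqref{piN}. The extra care you take with conditioning on $\CF(T_m)$ is exactly what the paper defers to its subsequent lemma, so nothing is missing.
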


\begin{pf} If $N_{m-1} = N-1$ and $N_m = N$, then by Lemma~\ref
{lemma:after-an-upward-jump} we have $ \mathfrak{z}  (X(T_m+)
) \in\Delta_+^{N, \delta_0} $, thus $\mu_{(1)}(T_m+) \le1 - \delta
_0$. It
follows from \eqref{estimate} that $ \MP_x (N_{m+1} = N+1\mid N_m =
N, N_{m-1} = N-1) $ satisfies
\[
\MP_x \bigl(\tau_{m+1} \le\eta^{N_m}_{m+1}
| N_m = N, N_{m-1} = N-1 \bigr) \le2 \biggl(
\frac{(1 - \delta_0) \vee(1/2)}{1 -
\delta
} \biggr)^{\lambda_{N }^{1/2} / \ol{\si}} = p_N
\]
in the notation of (\ref{piN}).
\end{pf}

Let us show that the same estimate holds under conditioning on any
event before the first jump. This will be necessary when we consider
the probability of many double jumps. The events $A(m, N)$ and $A(m_1,
N_1)$ are clearly \textit{not independent}, but the estimate still holds.

%
%le4.4 #&#
\begin{lemma}
\label{lemma:estimation-of-A}
For every $ m \ge1, N \ge3$ and $A \in\CF(T_m)$, we have
\begin{eqnarray*}
&&\MP_x (N_{m+1} = N+1 \mid N_m = N,
N_{m-1} = N-1, A ) \\
&&\qquad=\MP _x \bigl(A(m, N) \mid
N_m = N , N_{m-1} = N-1, A \bigr) \le p_N.
\end{eqnarray*}
\end{lemma}
\begin{pf}
We claim that for $x_m \in(0, \infty)^N $ with $ \mathfrak{z} (x_m)
\in\De^{N, \delta_0}_+$, we have
\[
\MP_x \bigl(N_{m+1} = N+1 | X(T_m+) =
x_m , N_m = N, N_{m-1} = N-1, A \bigr) \le
p_N.
\]
This follows from the estimate of the previous lemma, and from the fact
that $T_{m+1}$ is a function of the initial condition $x_m = X(T_m+)$
and of the Brownian increments driving the system $Y^{N_m, x_m,
m+1}(\cdot)= Y^{N, x_m, m+1}(\cdot)$; by construction, these increments
are independent of $\CF(T_m)$.

The strong Markov property now completes the proof.
\end{pf}

%co4.5 #&#
\begin{cor}
\label{lemma:double-jump-2}
For fixed integers $m_1 < m_2 < \cdots< m_j < m$ and $ N_1,\break   N_2,
\ldots, N_m, N$, we have
\[
\MP_x \bigl(A(m, N)\mid A(m_1, N_1),
A(m_2, N_2), \ldots, A(m_j,
N_j) \bigr) \le p_N.
\]
\end{cor}
\begin{pf}
This is an immediate corollary of Lemma~\ref{lemma:estimation-of-A}:
\begin{eqnarray*}
&&\MP_x \bigl(A(m, N)\mid A(m_1, N_1),
A(m_2, N_2), \ldots, A(m_j,
N_j) \bigr) \\
 &&\qquad = \MP_x \bigl(N_{m+1} = N+1,
N_m = N, N_{m-1} = N-1\mid\\
&&\qquad\quad A(m_1,
N_1), A(m_2, N_2), \ldots,
A(m_j, N_j) \bigr)
\\
& &\qquad\le \MP_x \bigl(N_{m+1} = N+1\mid N_m = N,
N_{m-1} = N-1,\\
&&\qquad\quad A(m_1, N_1),
A(m_2, N_2), \ldots, A(m_j,
N_j) \bigr).
\end{eqnarray*}
But the event $A = A(m_1, N_1)\cap\cdots\cap A(m_j, N_j)$ belongs to
$\CF(T_m)$, because each event $A(m_i, N_i)$, $i=1, \ldots, j$ depends
on the state of the system at stopping times $T_{m_i - 1} , T_{m_i}
, T_{m_i +1}$, %$ i=1, \cdots, j $,
and all of these are dominated by $T_m $. An application of Lemma~\ref
{lemma:estimation-of-A} completes the proof.
\end{pf}

We have the following consequence of Corollary~\ref{lemma:double-jump-2}.

%co4.6 #&#
\begin{cor}
\label{cor:multiple-A}
For fixed $m_1 < \cdots< m_j$ and $N_1, \ldots, N_j$, we have the estimate
\[
\MP_x \bigl(A(m_1, N_1)\cap\cdots\cap
A(m_j, N_j) \bigr) \le p_{N_1}\cdots
p_{N_j}.
\]
\end{cor}

%%%%%%%%%%%%%%%%%%%%%%%
%s4.4 #&#
\subsection{The proof of Proposition \texorpdfstring{\protect\ref{tails}}{4.1}}
\label{proof_Prop}
%%%%%%%%%%%%%%%%%%%%%%%

With $N = \mathfrak N(x)$, let us estimate the probability that the
market weight process $ \mu(\cdot) = \mathfrak{z}  (X(\cdot
)
) $ rises during the time-interval $(0,T)$ from the level $N$ to the
level $2L$, where $L > N$. First, the process has to reach the level
$L $; it will get there for the first time as the result of a split,
and immediately after the jump it will be in $\De^{L, \delta_0}_+$. Then
it will have time less than $T$ to reach the level $2L$. For each $n =
2, 3, \ldots,$ the random variable
\[
\Theta_n:= \inf \bigl\{ t \ge0: \mathcal{N} (t) = n \bigr\}
\]
will denote the first time when the counting process $ \mathcal{N}
(\cdot) $ of our model hits the $n$th level (i.e., the first time
$n$ companies are extant).

Suppose we are able to establish the following estimate: \textit{For
every $\be> 0$, there exist $L_0 > N$ and $c_0 > 0$ such that for
every $L > L_0$ and $y \in\De^{L, \delta_0}_+$ we have}
%
%e26 #&#
\begin{equation}
\label{crucial_estimate} \MP_y (\Theta_{2L} \le T ) \le
c_0e^{-\be L}.
\end{equation}
Then the rest of the proof will follow. Indeed, to get from $x$ to the
level $2L$ in time less than or equal to $T$, the process needs first
to get to the level $L$ by an upward jump. By Lemma~\ref
{lemma:after-an-upward-jump}, immediately after this jump, the process
will be at some point $y \in\De^{L, \delta_0}_+$. Starting from this
point, it has to reach the level $2L$ during the remaining time (which
is of course smaller than $T$). Therefore, integrating over $y \in\De
^{L, \delta_0}_+$ with respect to the distribution of $\mu
(\Theta_{L}
+ )$ and using the strong Markov property, we will get then
\[
\MP_x \Bigl( \max_{0 \le t \le T}\CN(t) \ge2 L \Bigr) =
\MP_x (\Theta_{2L} \le T ) \le c_0e^{-\be L}
,
\]
and the proof will be complete.

%$\bullet$
Thus, let us try to estimate the $\MP_y$-probability of the event $\{
\Theta_{2L} \le T\} $ in (\ref{crucial_estimate}), for $y \in\De^{L,
\delta_0}_+$. Suppose that it takes the process of market weights $M$
jumps to reach the level $2L $; then for every real number $u>0$ we have
%
%e27 #&#
\begin{equation}
\label{crucial_estimate2} \MP_y(\Theta_{2L} \le T) =
\MP_y (\Theta_{2L} \le T, M > 3u )+ \MP_y (
\Theta_{2L} \le T, M \le3u ).
\end{equation}
We shall try to find a number $u > L$ such that the event $\{ \Theta
_{2L} \le T, M \le3u\} $ is unlikely, and the event
$\{ \Theta_{2L} \le T, M > 3u\}$ is also unlikely. Let us introduce a
couple of new pieces of notation:
\[
\underline{\lambda}_L:= \min(\lambda_{L+1}, \ldots,
\lambda _{2L-1}),\qquad \overline{\lambda}_L:= \max(
\lambda_3, \ldots, \lambda_L).
\]

%$\bullet$
First, we estimate the probability $\MP_y (\Theta_{2L} \le T, M >
3u )$ on the right-hand side of (\ref{crucial_estimate2}). To get
from the level $L$ to the level $2L$ in $M$ jumps, one needs to make
$L$ more upward than downward jumps. But the total number of these
jumps is $M$, so the number of downward jumps is $(M-L)/2$. Therefore,
on the event $\{\Theta_{2L} \le T, M > 3u\}$, there are
\[
\frac{M - L}2 > \frac{3u - u}2 = u
\]
downward jumps. If a jump proceeds from the level $i$ to the level
$i-1$, it takes time $\eta_i \backsim\CE(\lambda_i)$ to make this jump
happen (counting from the last one). All these exponential jump times
are independent, so there exist i.i.d. $\CE(1)$ random variables
$\widetilde{\eta}_1, \ldots, \widetilde{\eta}_u$ such that
$\widetilde
{\eta}_i = \lambda_i\eta_i$. We can write
\[
\{\Theta_{2L} \le T, M > 3u\} \subseteq\{\eta_1 +
\cdots+ \eta_u \le T\} = \biggl\{\frac{\widetilde{\eta}_1}{\lambda_1} + \cdots+
\frac
{\widetilde{\eta
}_u}{\lambda_u} \le T \biggr\}.
\]
Since $\lambda_i \le\ol{\lambda}_{2L-1} , i = 3, \ldots, 2L-1$, we have
\[
\biggl\{\frac{\widetilde{\eta}_1}{\lambda_1} + \cdots+ \frac
{\widetilde{\eta
}_u}{\lambda_u} \le T \biggr\} \subseteq
\biggl\{\frac{\widetilde
{\eta}_1 +
\cdots+ \widetilde{\eta}_{u}}{\ol{\lambda}_{2L-1}} \le T \biggr\} = \biggl\{ \frac{\widetilde{\eta}_1 + \cdots+ \widetilde{\eta}_{u}}u \le
\frac
{T\ol{\lambda}_{2L-1}}u \biggr\}.
\]
We apply now techniques from the theory of large deviations, as in the
book \cite{DZBook}, Theorem~2.2.3 and Exercise 2.2.23(c), page 35. The
rate function $\CH$ for the exponential distribution $\CE(1)$ is given
by $\CH(s) = s - 1 - \log s$, for $s > 0$ (this function is denoted by
$\La^*$ in \cite{DZBook}, Section~2.2). For $F = [0, s]$, according
to the remark (c) on page 27 of the book \cite{DZBook} (immediately after
the statement of Theorem~2.2.3), we have
\[
\MP_y \biggl\{\frac{\widetilde{\eta}_1 + \cdots+ \widetilde{\eta}_{u}}u \le s \biggr\} \le 2 \exp \Bigl(-u
\inf_{v \in F}\CH (v) \Bigr).
\]
It is checked that the function $\CH$ is decreasing on $(0, 1]$;
therefore, for $s \in(0, 1)$, we have
\[
\inf_{v \in F}\CH(v) = \inf_{v \in[0, s]}\CH(v) =
\CH(s).
\]
Assuming that $u$ is large enough, namely $ u > L \vee (T \ol
{\lambda
}_{2L-1} )$, we obtain
\[
\MP_y \biggl\{\frac{\widetilde{\eta}_1 + \cdots+ \widetilde{\eta}_{u}}u \le \frac{T\ol{\lambda}_{2L-1}}u \biggr\}
\le2 \exp \biggl(-u \CH \biggl(\frac{T\ol{\lambda}_{2L-1}}u \biggr) \biggr),
\]
therefore,
%
%e28 #&#
\begin{eqnarray}
\label{est11} \MP_y (\Theta_{2L} \le T, M > 3u ) &\le&
\MP_y \biggl\{\frac
{\tilde{\eta}_1 + \cdots+ \tilde{\eta}_u}u \le\frac{T\ol
{\lambda
}_{2L-1}}u \biggr\}
\nonumber
\\[-8pt]
\\[-8pt]
\nonumber
& \le&2
\exp \biggl(-u \CH \biggl(\frac{T\ol{\lambda
}_{2L-1}}u \biggr) \biggr) =:
\Sigma_1 (u).
\end{eqnarray}

%$\bullet$
Now, let us estimate the probability $\MP_y (\Theta_{2L} \le T, M
\le3u )$ on the right-hand side of (\ref{crucial_estimate2}). In
order to reach the level $2L$ starting from $L$ in no more than $3u$
jumps, we need to have at least $L-1$ double jumps upward, as discussed
in the preliminary remarks of Section~\ref{sec: overview}.

One of these double jumps is from level $L$ to level $L+2$, occurring
at step $m_1$. Another is from level $L+1$ to level $L+3$, occurring at
step $m_2$, etc., up to a double jump upward from level $2L - 2$ to
level $2L$, occurring at step $m_{L-1}$. The subset $\{m_1, \ldots,
m_{L-1}\} \subseteq\{1, \ldots, 3u-1\}$ with $1 \le m_1 < m_2 <
\cdots
< m_{L-1} \le3u - 1$ can be chosen in
\[
\pmatrix{3u-1
\cr
L-1} \le\pmatrix{3u
\cr
L} \le\frac{(3u)^L}{L!}
\]
different ways. For a given subset $\{m_1, \ldots, m_{L-1}\} \subseteq
\{1, \ldots, 3u-1\}$, Corollary~\ref{cor:multiple-A} states that the
probability $\MP_y (\Theta_{2L} \le T, M \le3u )$ is no more than
\begin{eqnarray*}
&&\MP_y \bigl(A(m_1, L+1), A(m_2, L+2),
\ldots, A(m_{L-1}, 2L-1) \bigr)\\
&&\qquad \le p_{L+1}\cdots
p_{2L-1}
\\
& &\qquad\le 2^{L-1}\exp \bigl(-\al_0 \bigl(\lambda_{L+1}^{1/2}
+ \cdots+ \lambda_{2L-1}^{1/2} \bigr) \bigr)
\le2^{L-1}\exp \bigl(-\al _0(L-1)\underline{
\lambda}_L^{1/2} \bigr),
\end{eqnarray*}
%
% \footnote{ I made the change suggested by the referee on the line
%above. Please check. (IK)}
and thus
%
%e29 #&#
\begin{equation}
\label{est2} \MP_y (M \le3u, \Theta_{2L} \le T ) \le
\frac
{(3u)^L}{L!}2^{L-1}\exp \bigl(-\al_0(L-1)\underline{
\lambda }_L^{1/2} \bigr) =: \Sigma_2 (u).
\end{equation}

%$\bullet$
It follows from the estimates in (\ref{est11}) and (\ref{est2}) that
the probability of the event $ \{ \Theta_{2L} \le T \} $ which we
would like to estimate, as in (\ref{crucial_estimate}) and (\ref
{crucial_estimate2}), is\vspace*{2pt}
\begin{eqnarray*}
\MP_y (\Theta_{2L} \le T ) &=& \MP_y (
\Theta_{2L} \le T, M \le3u ) + \MP_y (
\Theta_{2L} \le T, M > 3u ) \\[2pt]
&\le& \Sigma_1(u) +
\Sigma_2(u).
\end{eqnarray*}
Here, $u > L$. Note that $\lambda_L \asymp cL^{\al}$ as $L \to\infty
$, so
$\ol{\lambda}_{2L-1} \asymp2^{\al}cL^{\al}$ and $\underline
{\lambda}_L \asymp
c2^{\al}L^{\al}$ as $L \to\infty$.

%$\bullet$
We need now to fix the undetermined parameter $u $: we shall let $u =
\lceil kL^{\al\vee1}\rceil$ for large enough $k > 0$. The function
$\CH
(\cdot)$ satisfies $ \CH(s) \ge-(1/2)\log s $ for $s \in(0, s_0)$
for some constant $s_0 \in(0, 1)$. Therefore, for large enough $L$ and
$k$ we have the following estimates for these two summands. First, let
us estimate $\Sigma_1 (u)$ of~(\ref{est11}): we have
\[
\CH \biggl(\frac{T\ol{\lambda}_{2L-1}}{u} \biggr) \ge -\frac{1}2\log \biggl(
\frac{T\ol{\lambda}_{2L-1}}{u} \biggr) \ge \frac{1}2\log\frac
{k}{T2^{\al
}c} =:
k_0,
\]
therefore,
\[
\Sigma_1(u) = 2\exp \biggl(-u \CH \biggl(\frac{T\ol{\lambda
}_{2L-1}}u \biggr)
\biggr) \le 2\exp \bigl(-kk_0L^{\al\vee1} \bigr).
\]
By taking $k$ large enough, we can make $kk_0$ as large as we want.
Since $\al\vee1 \ge1$, this proves that $\Sigma_1(u)$ decreases
faster than any exponential function as $u \to\infty$. This completes
the proof for this summand $\Sigma_1(u)$. The other summand
\[
\Sigma_2(u) = \frac{(3u)^L}{L!} 2^{L-1}\exp \bigl(-
\al_0(L-1) \underline{\lambda}_L^{1/2} \bigr)
\]
from (\ref{est2}) decreases faster than any $e^{-\be L}$ as $L \to
\infty$ for any fixed $\be> 0$, because $\underline{\lambda}_L
\asymp
cL^{\al}$ and
\[
\log\Sigma_2(u) = L\log(3u) - \log(L!) + (L-1)\log2 - \al
_0(L-1) \underline{\lambda}_L^{1/2}
\]
is asymptotically smaller as $L \to\infty$ than
\[
(\al\vee1)L\log L - L\log L - \al_0 c (L-1)L^{(1/2)\vee(\al/2)} \asymp -
\al_0 c L^{1 + (\al\vee1)/2}.
\]
This establishes the bound of (\ref{crucial_estimate}), so the proof of
the proposition is complete. %\qed

%%%%%%%%%%%%%%%%%%%%%%%
%s4.5 #&#
\subsection{The proof of Theorem \texorpdfstring{\protect\ref{thmm2}}{2.2}}
\label{proof_thmm2}
%%%%%%%%%%%%%%%%%%%%%%%

The general philosophy of the proof is as follows: For any given real
number $ T >0 $ we shall try to find an \textit{equivalent martingale
measure}, that is, a probability measure $\MQ_T$ on $\CF(T)$ with the
following properties:
\begin{longlist}[(ii)]
\item[(i)] $ \MQ_T \sim\MP$ on $\CF(T) $;

\item[(ii)] for every portfolio $\pi(\cdot) $, the wealth process $ V^{\pi}
( t), 0 \le t \le T $ is a $\MQ_T$-martingale.
\end{longlist}

Suppose this is done; take two portfolios $\pi(\cdot)$ and $\rho
(\cdot
)$, and assume for a moment $\pi(\cdot)$ allows an arbitrage
opportunity relative to $\rho(\cdot)$ on a given time horizon $[0,T]$
with $T \in( 0, \infty)$. Then the conditions of \eqref{arbitr} hold
with respect to the measure $\MP$ and, therefore, with respect to the
measure $\MQ_T$ as well. But
\[
V^{\pi}(t) - V^{\rho}(t),\qquad 0 \le t \le T
\]
is a $\MQ_T$-martingale with initial value zero, therefore, $ \ME
^{\MQ
_T} (V^{\pi}(T) - V^{\rho}(T) ) = 0$ holds in contradiction to
\eqref{arbitr}. This contradiction completes the proof, that arbitrage
is not possible.

For a model with a given, fixed number of companies, an equivalent
martingale measure is constructed thus: a \textsc{Girsanov} change of
measure ensures that each capitalization process is a martingale with
respect to the new measure, and the wealth process is a stochastic
integral with these processes as integrators. As a result, the wealth
process is also a martingale with respect to the new measure. But here
the number of extant companies fluctuates, so we shall carry out a
\textsc{Girsanov} construction up to the first jump, then carry out the
same construction with the new number of stocks up to the second jump,
and so on. We do this in a number of steps, as follows.

\textit{Step} 1: First, as a preliminary step, let us consider the
CBP-based market model from Section~\ref{FNC} with the dynamics of
\eqref{fixed} and under an appropriate filtration $ \mathbb{G} = \{
\CG
(t)\}_{ 0 \le t < \infty} $ that satisfies the usual conditions.

Consider the processes $\Upsilon_i (\cdot) = \{\Upsilon_i(t), 0 \le
t < \infty\}, i = 1, \ldots, N$, given by
%
%e30 #&#
\begin{eqnarray}
\label{Ups} \Upsilon_i(t):= \int_0^t
\frac{\mathrm{d}X_i(s)}{X_i(s)} = \SL_{k=1}^N \int_0^t
\mathbf{ 1}_{\{ X_i(t) = X_{(k)}(t)\}} \bigl( g_{k} \,\mathrm{d}t +
\si_{k} \,\mathrm{d}W_i(t) \bigr),
\nonumber
\\[-8pt]
\\[-8pt]
\eqntext{ 0 \le t < \infty.}
\end{eqnarray}
Over each interval $ [ 0, T] $ with $T>0$ a given real number, each
$ \Upsilon_i(\cdot\wedge T) $ can be turned into a martingale by
the change of probability measure
\[
\MQ_T (A) = \MP \bigl(Z(T) \mathbf{ 1}_A \bigr),\qquad A \in
\CG(T) .
\]
Here,
\[
Z(t) = \exp \bigl(- M (t) - \tfrac{1}2\langle M\rangle(t) \bigr),\qquad 0 \le
t < \infty,
\]
and the $\MP$-martingale $M (\cdot) = \{ M(t), 0 \le t < \infty\} $
is given by
\[
M(t) = \SL_{k=1}^N \biggl(\frac{g_k}{\si_k} \biggr)
\SL_{i=1}^N \int_0^t
\mathbf{1}_{\{X_i(u) = X_{(k)}(u)\}}\,\mathrm{d}W_i(u) ,\qquad 0 \le t < \infty.
\]
The quadratic variation of this martingale $M (\cdot)$ is
%
%e31 #&#
\begin{equation}
\label{estim} \langle M\rangle(t) = \SL_{k=1}^N \biggl(
\frac{g_k}{\si_k} \biggr)^2 \SL_{i=1}^N \int
_0^t \mathbf{1}_{\{X_i(u) = X_{(k)}(u)\}} \,\mathrm{d}u \le
t N\max_{1 \le k \le N} \biggl(\frac
{g_k}{\si_k} \biggr)^2.
\end{equation}
Using the \textsc{Novikov} condition (\cite{KS1991}, Proposition~3.5.12),
we see that $Z(\cdot)$ is a $\MP$-martingale, and, therefore,
$\mathbf{
Q}_T$ a probability measure on $ \mathcal{F} (T) $. Also, the
quadratic variations of the processes $\Upsilon_i(\cdot)$ from (\ref
{Ups}) are given by
%
%e32 #&#
\begin{equation}
\label{quadraticvariation}\qquad \langle\Upsilon_i \rangle(t) = \SL_{k=1}^N
\si^2_{k} \int_0^t
\mathbf{ 1}_{\{ X_i(u) = X_{(k)}(u)\}} \,\mathrm{d}u \le t\max_{1 \le k \le N}
\si_k^2 ,\qquad i=1, \ldots, N ,
\end{equation}
whereas the independence of the Brownian motions $W_i(\cdot)$ and
$W_j(\cdot)$ gives that
%
%e33 #&#
\begin{equation}
\label{crossvariation} \langle\Upsilon_i, \Upsilon_j \rangle(
\cdot) \equiv0 \qquad\mbox{holds for } 1 \le i \neq j \le N.
\end{equation}

It is clear from this discussion that $ X_i (\cdot\wedge T) = \exp
 ( Y_i (\cdot\wedge T) - (1/2) \langle Y_i \rangle(\cdot\wedge T)
 ) , i=1, \ldots, N $ are martingales (with zero
cross-variations) under the probability measure $ \mathbf{ Q}_T $,
which thus earns the appellation of Equivalent Martingale Measure (EMM)
for the model of Section~\ref{FNC}.

%re8 #&#
\begin{rmk}
\label{NoDiv}
It follows now easily, that the CBP-based market model of Section~\ref
{FNC} is not diverse. For if this model were diverse on some
time-horizon $[0,T]$ of finite length, Proposition~6.2 of \cite{FK2009}
would proscribe for it EMMs, such as the probability measure $\mathbf{
Q}_T$ just constructed. See also an illuminating discussion in~\cite{BFK2005},
Section~7.
\end{rmk}

\textit{Step} 2:
Now, let $M^{N, x, n} (\cdot) = \{M^{N, x, n}(t), 0 \le t < \infty\}
$ be the same martingale $M(\cdot)$ for the copy of a CBP-based market model
\[
\bigl(\exp \bigl(Y^{N, x, n}_1(\cdot) \bigr), \ldots, \exp
\bigl(Y^{N, x,
n}_N(\cdot) \bigr) \bigr)^\prime
\]
from Section~\ref{formcon}. This model has parameters $g_k = g_{Nk},
\si_k =
\si_{Nk}, k = 1, \ldots, N$, the initial condition is $x$, and all the
processes $ \{ M^{N, x, n} (\cdot)\}_{n \in\mathbb{N}} $ are
independent. Also, denote by
\[
\Upsilon^{N, x, n}_i(\cdot), \qquad i = 1, \ldots, N,
\]
the processes $\Upsilon_i (\cdot) $ of (\ref{Ups}) for this copy of the
market model. Slightly abusing notation, we define
%
%e34 #&#
\begin{equation}
\label{M} M(t) = \SL_{m \in\mathbb{N}_0} M^{N_m, x_m, m+1}(t\wedge
T_{m+1} - t\wedge T_m), \qquad 0 \le t < \infty
\end{equation}
in the notation of Section~\ref{formcon}. This is an $\mathbb{F}=\{\CF
(t)\}_{
t \ge0}$-local martingale, with localizing sequence $\{T_m\}_{m \in
\mathbb{N}_0}$ and quadratic variation
\[
\langle M\rangle(t) = \SL_{m \in\mathbb{N}_0} \bigl\langle M^{N_m, x_m, m+1} \bigr
\rangle( t \wedge T_{m+1} - t \wedge T_{m}).
\]

\textit{Step} 3:
Let us verify the \textsc{Novikov} condition
%
%e35 #&#
\begin{equation}
\label{novikov} \ME_x \bigl[ \exp \bigl(\tfrac{1}2 \langle
M \rangle(T) \bigr) \bigr] < \infty,\qquad 0 \le T < \infty
\end{equation}
of \cite{KS1991}, Proposition~3.5.12. The expression \eqref{estim}
leads to the estimate
\[
\langle M\rangle(t) \le\SL_{m \in\mathbb{N}_0} (t \wedge T_{m+1} - t
\wedge T_{m} ) N_m\max_{1 \le k \le N_m} \biggl(
\frac
{g_{N_m k }}{\si_{N_m k}} \biggr)^2.
\]
Since Assumption~\ref{ass2} implies that
\[
\frac{|g_{N_m k}|}{\si_{N_mk} } \le\frac{\ol{g}}{\underline{\si
}} =: C < \infty \qquad\mbox{holds for all } m
\ge0 , k \ge1 ,
\]
we get
%
%e36 #&#
\begin{equation}
\label{<M>} \langle M\rangle(T) \le C^2T\max_{0 \le t \le T}
\mathcal {N}(t) ,
\end{equation}
and so the left-hand side in \eqref{novikov} can be estimated as
\[
\ME_x \biggl( \exp \biggl[\frac{C^2T}{2} \cdot\max
_{0 \le t \le
T}\CN(t) \biggr] \biggr).
\]
But this quantity is finite for any given real number $T \in( 0,
\infty
)$ because of \eqref{subexp} from Proposition~\ref{tails}, establishing
the Novikov condition \eqref{novikov}. Thus, the stochastic exponential
%
%e37 #&#
\begin{equation}
\label{Z} Z (\cdot) = \bigl\{Z(t), 0 \le t <\infty\bigr\},\qquad Z(t) = \exp
\bigl[-M(t) - \tfrac{1}2 \langle M\rangle(t) \bigr]
\end{equation}
is a $\MP$-martingale, and we can define a new probability measure
$\MQ
_T$ that satisfies
%
%e38 #&#
\begin{equation}
\label{eq: cue} \mathrm{d}\MQ_T = Z(T) \,\mathrm{d}\MP \qquad\mbox{on }
\CF(T), \mbox{ for each given } T \in[ 0, \infty).
\end{equation}

\textit{Step} 4:
We shall show now that, for any given real number $ T>0 $, the
wealth process $V^{\pi}(\cdot\wedge T)$ is a martingale for any given
portfolio $\pi(\cdot)$, under the new measure $ \MQ_T $ just
constructed. [This is very clearly the case for the cash portfolio, as
$ V^\kappa(\cdot) \equiv1$.]

We write the equation for $V^{\pi}(\cdot)$ in the form
\begin{eqnarray}
\int_0^t \frac{\mathrm{d}V^{\pi}(u)}{V^{\pi}(u)} =
\SL_{m \in
\mathbb{N}_0} \SL_{i=1}^{N_m} \int_0^{ t \wedge T_{m+1} - t \wedge
T_m}
\pi_i( T_m+u) \,\mathrm{d}\Upsilon^{N_m, x_m, m+1}_i(u)
,\nonumber\\
\eqntext{0 \le t < \infty.}
\end{eqnarray}
Here, the processes $\Upsilon_i^{N_m, x_m, m+1}(\cdot\wedge T)$ are
defined via
\[
\Upsilon^{N_m, x_m, m+1}_i(t):= \int_0^{ t\wedge T_{m+1} - t\wedge
T_m}
\frac{ \mathrm{d}X^{N_m, x_m, m+1}_i(s) }{X^{N_m, x_m,
m+1}_i(s)} ,\qquad i = 1, \ldots, N_m ,
\]
and are local martingales under the probability measure $\MQ_T $ in
(\ref{eq: cue}). % \footnote{ A small perestroika here. Please check.
%(IK)}
Therefore, the process $L^\pi(\cdot\wedge T) $, defined via
\[
L^{\pi} (t):= \int_0^t
\frac{\mathrm{d}V^{\pi}(u)}{V^{\pi}(u)} = \SL_{m \in\mathbb{N}_0} \SL_{i=1}^{N_m} \int
_0^{ t \wedge T_{m+1}
- t
\wedge T_m} \pi_i( T_m+u)
\,\mathrm{d}\Upsilon^{N_m, x_m, m+1}_i(u) ,
\]
is an $(\mathbb{F}, \MQ_T)$-local martingale. The value process
$V^{\pi
}(\cdot)$ is the stochastic exponent of $L^{\pi}(\cdot)$, namely
\[
V^{\pi}(\cdot) = \exp \bigl(L^{\pi}(\cdot) -
\tfrac{1}2 \bigl\langle L^{\pi} \bigr\rangle(\cdot) \bigr).
\]
If we can establish the \textsc{Novikov} condition
%
%e39 #&#
\begin{equation}
\label{Noiv} \mathbf E^{\MQ_T} \bigl[ \exp \bigl(\tfrac{1}2
\bigl\langle L^{\pi} \bigr\rangle(T) \bigr) \bigr]< \infty,
\end{equation}
then it will turn out that $V^{\pi} ( \cdot\wedge T)$ is an $(\mathbb
{F}, \MQ_T)$-martingale, as indeed we set out to show at the start of
this proof.

Indeed, from \eqref{quadraticvariation} and \eqref{crossvariation} we
see that the processes $\Upsilon_i^{N_m, x_m, m+1}(\cdot) , i=1,
\ldots, N_m $ have zero cross-variations, and quadratic variations
\begin{eqnarray*}
\bigl\langle\Upsilon_i^{N_m, x_m, m+1} \bigr\rangle(t )& = &\SL
_{k=1}^{N_m} \si^2_{k} \int
_0^{ t\wedge T_{m+1} - t\wedge T_m} \mathbf{ 1}_{  \{ X_i^{N_m, x_m, m+1} (u) = X_{(k)}^{N_m, x_m,
m+1} (u) \}} \,\mathrm{d}u
\\
&\le& t \max_{1 \le k \le
N_m}\si^2_{N_mk}.
\end{eqnarray*}
Therefore, the process $ \Upsilon_\pi^{N_m, x_m, m+1}(\cdot) =
\{
\Upsilon_{\pi}^{N_m, x_m, m+1}(t), 0 \le t < \infty \} $ given by
\[
\Upsilon_{\pi}^{N_m, x_m, m+1}(t):= \SL_{i=1}^{N_m}
\int_0^t\pi_i( T_m +u)
\,\mathrm{d}\Upsilon_i^{N_m, x_m, m+1}(u)
\]
has quadratic variation $
 \langle\Upsilon_{\pi}^{N_m, x_m, m+1} \rangle(T) \le
TK_\pi^2 \cdot\max_{1 \le k \le N_m}\si_{N_m k}^2 $,
because $ |\pi_i(t)| \le K_\pi< \infty$ holds for all $ 0 \le t <
\infty, i = 1, \ldots, N_m $. This gives
\[
\bigl\langle L^{\pi} \bigr\rangle(T) = \sum
_{k \in\mathbb{N}_0} \bigl\langle\Upsilon_{\pi}^{N_k, x_k, k+1}
\bigr\rangle ({T\wedge T_{k+1} - T\wedge T_k} ) \le
TK_\pi^2 \cdot\mathop{\max_{N \ge2}}_{ 1 \le k \le N}
\si_{Nk}^2 \le T K_{\pi}^2 \ol{
\si}^{ 2} ,
\]
and  property (\ref{Noiv}) is proved.
%\qed

%%%%%%%%%%%%%%%%%%%%%%%
%s4.6 #&#
\subsection{Some open questions}
\label{open}
%%%%%%%%%%%%%%%%%%%%%%%

(I) The above proof used the boundedness of the portfolio $\pi(\cdot)
$ in a crucial way. It would be very interesting to see whether
arbitrage in this (or in a related) model with splits and mergers might
exist with more general, unbounded portfolios.\vspace*{-6pt}
\begin{longlist}[(iii)]
\item[(II)] The estimate (\ref{<M>}) also gives the bound
\[
\langle M\rangle(T)- \langle M\rangle(t) \le C^2 T \cdot\max
_{t \le\theta\le T}\mathcal{N}(\theta)
\]
for every $ t \in[0, T]$. From the theory of Bounded Mean Oscillation
(BMO) Martingales as developed, for instance, in the book \cite
{BMOBook}, in order to show that the stochastic exponential $Z(\cdot)$
of (\ref{Z}), (\ref{M}) is a martingale, it suffices to show that
the~process $\mathbf{ E}  ( \langle M \rangle(T) - \langle M \rangle
(t) | \mathcal{F} (t)  ) , 0 \le t \le T $ is uniformly
bounded. Thus, on the strength of the last display, it is enough to
show that the process
\[
\mathbf{ E} \Bigl( \max_{ t \le\theta\le T} \mathcal{N}(\theta)\Big |
\mathcal{F} (t) \Bigr) ,\qquad 0 \le t \le T
\]
is uniformly bounded. If this can be done, it might obviate the need to
establish the sub-exponential bound of Proposition~\ref{tails}.

\item[(III)] It would be very interesting to decide whether absence of
arbitrage, say with respect to the market portfolio, survives when one
begins to constrain the splits and/or mergers that can happen over a
given period of time, or along genealogies of companies produced by any
given split (one could mandate, e.g., that the resulting
companies cannot be touched for a certain amount of time). We believe
not, but this issue remains to be settled.

\item[(IV)] It would be interesting to extend the above generic analysis, by
allowing for some ``idiosyncratic'' features in the model. These can
take the form of allowing the growth rates and the local co-variation
rates for the different assets to depend, not only on the rank, but
also on the name of the particular company (e.g., as in Ichiba et al.
\cite{Ichiba11} or the so-called \textit{second-order model} from
\cite
{FIK2013b}). They could also take the form of giving strategic control
to the various companies, on decisions such as whether to engage in a
merger or not. Our present model does not allow such features.
\end{longlist}
%%%%%%%%%%%%%%%%
\begin{appendix}\label{app}
%s5 #&#
\section*{Appendix: A comparison lemma}
%%%%%%%%%%%%%%%%
%
%\setcounter{lemma}{0}
%le5.1 #&#
\begin{lemmaa}
\label{lemma}
Consider a CBP-based market model as described in Definition~\ref{CBP}
of Section~\ref{FNC}. Assume in the manner of \eqref{condition} that
$
g_1 \le\min_{2 \le k \le N}g_k $, and let
\[
\tau:= \inf \bigl\{t \ge0: \exists i = 1, \ldots, N , \mbox{s.t. }
\mu_i(t) = 1 - \delta \bigr\} ,\qquad \widetilde{\si}: = \max
_{1 \le k \le N}\si_k.
\]
Then for an independent random variable $ \eta$, exponentially
distributed with parameter $\lambda> 0$, we have
%
%e40 #&#
\begin{equation}
\label{est} \MP(\tau\le\eta) \le2 \biggl(\frac{\mu_{(1)}(0)\vee{1}/2}{1 -
\delta
}
\biggr)^{\widetilde{\si}^{-1}\lambda^{1/2}}.
\end{equation}
\end{lemmaa}

The idea behind the argument of the proof is as follows. We can rewrite
the stopping time $\tau$ as
$
\tau= \inf \{t \ge0: \mu_{(1)}(t) = 1 - \delta \} $;
indeed, whenever a market weight reaches $ 1 - \delta$, then it also
gets to be the largest market weight, because $1 - \delta> 1/2$. The
process $\log\mu_{(1)} (\cdot)$ ``reflects off'' $\log\mu_{(2)}
(\cdot)
$, as made precise in~\eqref{localtimes}; it behaves like an It\^o
process, but when it collides with $\log\mu_{(2)} (\cdot) $ a positive
local time term emerges as a result of the collision.

Now, we would like to replace $\log\mu_{(1)} (\cdot)$ by something
larger. Consider a similar process $ U (\cdot)$, now reflected at
$\log(1/2)$; the second top market weight never gets above $1/2$, and
for this new reflection pattern the logarithm of the top market weight
will reflect earlier, so the resulting reflected process $ U (\cdot)
$ will be larger.
In particular, we shall have $ \tau\ge\widetilde{\tau}:= \inf
\{t \ge0: U(t) = 1 - \delta \} $;
and the probability that the exponential clock
(which is responsible for mergers) rings later than $ \tau$ (the
time of a split), will be smaller than the probability that this
exponential clock rings later than $ \widetilde{\tau} $. But $U(\cdot
)$ is reflected Brownian motion, so we can calculate this latter
probability explicitly.

\begin{pf*}{Proof of Lemma \ref{lemma}}
Let us derive an equation for the dynamics of $ \log
\mu
_{(1)}(\cdot) $. We recall the expression for the dynamics of $ \log
\mu_i(\cdot) $ from (\ref{dyna}), and denote by $\Lambda_{(k, \ell)}
(\cdot)= \{\Lambda_{(k, \ell)}(t), t \ge0\}$ the local time at the
origin of the continuous semimartingale
\[
\log\mu_{(k)} (\cdot) - \log\mu_{(\ell)} (\cdot)=
Y_{(k)} (\cdot)- Y_{(\ell)} (\cdot) \qquad \mbox{for } 1 \le k < \ell
\le N.
\]
We have $\Lambda_{(k, \ell)} (\cdot) \equiv0$ if $\ell- k \ge2$, see
\cite{Ichiba11}, Lemma~1, as well as
%
%e41 #&#
\begin{equation}
\label{logmu1} \mathrm{d}\log\mu_{(1)}(t) = \SL_{i=1}^N
\mathbf{ 1}_{\{\mu_i(t)
= \mu
_{(1)}(t)\}} \,\mathrm{d}\log\mu_{i}(t) +
\frac{1}{ 2 } \,\mathrm {d}\Lambda_{(1,2)}(t)
\end{equation}
from \cite{BG2008}. We also note from \cite{Ichiba11} that the set $\{t
\ge0\mid\mu_{(k)}(t) = \mu_{(1)}(t)\} = \{t \ge0\mid Y_{(k)}(t) =
Y_{(1)}(t)\}$ has zero Lebesgue measure, for $k = 2, \ldots, N$.
Introduce the following notation:
\begin{eqnarray*}
\be(t)&: =& g_1 - \SL_{k=1}^Ng_k
\mu_{(k)}(t) - \frac{1}2\SL_{k=1}^N\si
_k^2 \bigl(\mu_{(k)}(t) -
\mu_{(k)}^2(t) \bigr),\\ %$$
%$$
a(t)&:=&
\si_1^2 \bigl(1 - \mu_{(1)}(t)
\bigr)^2 + \SL_{k=2}^N\si_k^2
\mu_{(k)}^2(t) >0.
\end{eqnarray*}
Using \eqref{dyna}, we can rewrite \eqref{logmu1} in the notation of
(\ref{Levy}) [denoting by $V (\cdot)= \{ V(t), t \ge0\}$ yet another
one-dimensional standard $ \{ \CF(t) \}_{t \ge0}$-Brownian motion]
as
\begin{eqnarray*}
\mathrm{d}\log\mu_{(1)}(t) &=& \be(t) \,\mathrm{d}t + \si_1\,
\mathrm {d}B_1(t) - \SL_{k=1}^N
\si_k\mu_{(k)}(t)\,\mathrm{d}B_k(t) +
\frac{1}{
2 } \,\mathrm{d}\Lambda_{(1,2)}(t)
\\
&= &\be(t)\,\mathrm{d}t + \sqrt{a(t)} \,\mathrm{d}V(t)+ \frac{1}{ 2 } \,\mathrm{d}
\Lambda_{(1,2)}(t).
\end{eqnarray*}
For the coefficient $a(t)$, we get the following estimate:
$a(t) \le\si_1^2 + \break  \max_{2 \le k \le N}\si_k^2 \le2 \widetilde
{\si}^2$.
Also, the coefficient $a(t)$ is bounded away from zero, at least until
the moment $\tau$, because for $t \le\tau$, $\mu_{(1)}(t) \le1 -
\delta
$, and
$a(t) \ge\si_1^2\delta^2$. Since $\mu_{(k)}(t) \in[0, 1]$, we get:
$\mu
_{(k)}(t) - \mu^2_{(k)}(t) \ge0$. It is easy to get the following
estimate for $\be(t)$:
\begin{eqnarray*}
\be(t)& \le& g_1 \bigl(1 - \mu_{(1)}(t) \bigr) -
\SL_{k=2}^Ng_k\mu _{(k)}(t) \le
g_1 \bigl(1 - \mu_{(1)}(t) \bigr) - \min
_{2 \le k \le N}g_k\cdot\SL _{k=2}^N
\mu_{(k)}(t)\\ & =& \Bigl(g_1 - \min_{2 \le
k \le N}g_k
\Bigr) \bigl(1 - \mu_{(1)}(t) \bigr) \le0.
\end{eqnarray*}
Let us make a random time change, using Lemma~2 from \cite{MyOwn1} (for
$\si= 1$ in the notation of this lemma). The time change is as follows:
\[
t = T(s) = \inf\bigl\{t \ge0\mid\De(t) \ge s\bigr\},\qquad s = \De(t):= \int
_0^ta(v)\,\mathrm{d}v, t \in[0, \tau].
\]
Denoting by $\ol{V} (\cdot) =  \{ \ol{V}(s), s \ge0 \}$ yet
another standard Brownian motion, and
\begin{eqnarray*}
Z (\cdot) &= &\bigl\{ Z(s), s \ge0\bigr\} ,\qquad Z(s) = \log\mu_{(1)}
\bigl(T(s)\bigr) ,
\\
\ol{\Lambda} (\cdot) &=& \bigl\{\ol{\Lambda}(s), s \ge0 \bigr\} ,\qquad \ol {\Lambda}(s)
= \tfrac{1}{ 2 } \Lambda_{(1, 2)}\bigl(T(s)\bigr) ,\\
 \ga(s) &=&
\frac{\be(T(s))}{a(T(s))}
\end{eqnarray*}
we get the following equation:
\[
\mathrm{d}Z(s) = \ga(s) \,\mathrm{d}s + \mathrm{d}\ol{V}(s) + \mathrm {d}\ol{
\Lambda}(s)\qquad \mbox{for } s \le\De(\tau).
\]
One important remark: $\ga(s) \le0$ for all $s \le\De(\tau)$.
Let us establish one useful property of $\ol{\La}$:
%
%e42 #&#
\begin{equation}
\label{eq:special-comp} \mbox{if } Z(s) = \log\mu_{(1)}\bigl(T(s)\bigr) \ge
\log(1/2) \qquad\mbox{then } \mathrm{d}\ol{\La}(s) = 0.
\end{equation}
Indeed, if $\mu_{(1)}(t) > \mu_{(2)}(t)$, then $\mathrm{d}\La(t) = 0$;
in words, $\La$ is constant in a neighborhood of $t$. Therefore, if
$\mu
_{(1)}(T(s)) > \mu_{(2)}(T(s))$, then $\mathrm{d}\ol{\La}(s) \equiv
\,\mathrm{d}\La(T(s)) = 0$. In particular, if $\mu_{(1)}(T(s)) > 1/2$,
then $\mu_{(1)}(T(s)) > 1/2 \ge\mu_{(2)}(T(s))$, and \mbox{$\mathrm{d}\ol
{\La
}(s) = 0$}.

We shall show the comparison $Z(\cdot) \le Z_0(\cdot)$, where $Z_0
(\cdot)= \{Z_0(s), s \ge0\}$ is a one-dimensional Brownian motion
with zero drift and unit dispersion, starting from $\log\mu_{(1)}(0)$
and reflected at $\log(1/2)$, namely
\[
\mathrm{d}Z_0(s) = \mathrm{d}\ol{V}(s) + \mathrm{d}
\Lambda^0(s).
\]
Here, $\Lambda^0 (\cdot)= \{ \Lambda^0(s), s \ge0\}$ is the local
time of this reflecting Brownian motion at the site $\log(1/2)$. More
precisely, let us show that
%
%e43 #&#
\begin{equation}
\label{eq:comp-Z} Z(s) \le Z_0(s), \qquad s \le\De(\tau).
\end{equation}

The proof of \eqref{eq:comp-Z} proceeds along the same lines as in
\cite{IWBook},
Chapter~6, Theorem~1.1, but with some adjustments which are
necessary because of the local time terms. We define $\psi(x):= x_+^3$
for $x \in\BR$; then $\psi\in C^2(\BR)$, and for $s \le\De(\tau)$
%
%e44 #&#
\begin{eqnarray}
\label{eq:final-comp} \psi\bigl(Z(s) - Z_0(s)\bigr) &=& \int
_0^s\psi'\bigl(Z(u) -
Z_0(u)\bigr)\ga(u)\mathrm {d}u
\nonumber
\\[-8pt]
\\[-8pt]
\nonumber
&&{} + \int_0^s
\psi'\bigl(Z(u) - Z_0(u)\bigr) \bigl(\mathrm{d}\ol{
\Lambda}(u) - \mathrm {d}\Lambda^0(u)\bigr).
\end{eqnarray}
This does not contain stochastic integrals, because in the expression for
$Z(s) - Z_0(s)$ they cancel out. Let us show that the right-hand side
of \eqref{eq:final-comp} is nonpositive. Indeed, $\ga(s) \le0$, and
$\psi'(x) = 3x_+^2 \ge0$, so the first integral in the right-hand side
of \eqref{eq:final-comp} is nonpositive. Also, when $Z(s) > Z_0(s)$,
we have: $Z(s) > Z_0(s) \ge\log(1/2)$. Therefore, for these $s \le
\De
(\tau)$, from \eqref{eq:special-comp} we get: $\mathrm{d}\ol
{\Lambda
}(s) = 0$,
and so $ (\mathrm{d}\ol{\Lambda}(u) - \mathrm{d}\Lambda
^0(u) )
\le0$. Once again using the fact that $\psi'(x) = 3x_+^2 \ge0$, we
get that the second integral in \eqref{eq:final-comp} is also
nonpositive. So the whole expression $\psi(Z(s) - Z_0(s)) \le0$, and
this implies $Z(s) \le Z_0(s)$, $s \le\De(\tau)$.

Let $\tau_0:= \inf\{t \ge0\mid Z_0(t) = \log(1 - \delta)\}$. Since
$\tau
$ is the hitting time by $\mu_{(1)}$ of $1 - \delta$, or, in other words,
by $\log\mu_{(1)}$ of $\log(1 - \delta)$, we get that $\Delta(\tau
)$ is
the hitting time by $Z(\cdot) = \log\mu_{(1)}(T(\cdot))$ of the level
$\log(1 - \delta)$. Therefore, by comparison~\eqref{eq:comp-Z}, we have
$\tau_0 \le\Delta(\tau)$. But $\De'(t) = a(t) \le2 \widetilde
{\si
}^2 $, so
\[
\De(t) = \int_0^ta(s)\,\mathrm{d}s \le 2
\widetilde{\si}^2t ,\qquad t \le \tau.
\]
In particular, $\De(\tau) \le2\widetilde{\si}^2\tau$, and $\tau
\ge
\tau_0/(2 \widetilde{\si}^2)$. Therefore,
\[
\MP(\tau\le\eta) \le \MP\bigl(\tau_0 \le2 \widetilde{
\si}^2 \eta \bigr).
\]
Using \cite{BSBook}, Part II, Section~3, formula 1.1.2, and the fact
that $ 2 \widetilde{\si}^2 \eta$ is exponentially distributed with
parameter $
\lambda/(2\widetilde{\si}^2) $, we get
\[
\MP \bigl( \tau_0 \le2 \widetilde{\si}^2 \eta \bigr) =
\MP _{\widetilde
{x}} \Bigl( \sup_{0 \le s \le2 \widetilde{\si}^2 \eta
}\bigl|\ol{B}(s)\bigr| \ge y \Bigr)
= \frac{\operatorname{ch} (\widetilde
{x}\sqrt{\lambda
} \widetilde{\si}^{-1} ) }{\operatorname{ch} (\widetilde
{y}\sqrt{\lambda}
\widetilde{\si}^{-1} )}
\]
with $\widetilde{y} = \log(1 - \delta) - \log(1/2), \widetilde{x}=
(\log(\mu_{(1)}(0)) - \log(1/2) )^+$. From the elementary inequality
$
(e^z /2) \le\operatorname{ch}z \equiv(e^z + e^{-z})/2 \le e^z, z \ge0
$
we conclude
\[
\frac{\operatorname{ch} (\widetilde{x}\sqrt{\lambda}
\widetilde{\si}^{-1} )}{\operatorname{ch}
 (\widetilde{y}\sqrt{\lambda} \widetilde{\si}^{-1} )} \le 2\exp \bigl(- (\widetilde{y} - \widetilde{x} )\sqrt{
\lambda} \widetilde{\si}^{-1} \bigr) ,
\]
and it is then straightforward to rewrite the right-hand side as \eqref{est}.
This completes the proof.
\end{pf*}
\end{appendix} %\qed

%%%%%%%%%%%%%%%%
%s1.2 #&#
\section*{Acknowledgements}
%%%%%%%%%%%%%%%%

The authors would like to thank E. Robert \textsc{Fernholz},
Jean-Pierre \textsc{Fouque}, Soumik \textsc{Pal}, Vasileios \textsc
{Papathanakos}, Walter\break \textsc{Schachermayer}, Johannes \textsc{Ruf}
and Philip \textsc{Whitman} for very helpful discussions, and Johannes
\textsc{Ruf} for his detailed comments. They are obliged to Winslow
\textsc{Strong} for
an updated version of the manuscript \cite{Strong2011}. They are deeply
indebted to the referee for reading of the manuscript % \footnote{ A
%small change here, changed ``his" to ``a". (IK)}
very carefully, and for making many incisive comments and suggestions.

%%%%%%%%%%%%%
%\bibliographystyle{plain}

%\bibliography{aggregated}
%
% imsref loaded by akundreckaite, 2015-07-07 13:52:12
%

%\begin{appendix}
%\section{}
%\end{appendix}

% zodis "Acknowledgments" paliekamas pagal autoriu
%\section*{Acknowledgments}

%\begin{supplement}[id=suppA]
%\sname{Supplement A}
%\stitle{}
%\slink[doi]{10.1214/00-AAPXXXXSUPP} %[doi,text={...}] - jei reikia
%suskaldyti doi
%\sdatatype{.pdf}
%\sfilename{aapXXXX\_supp.pdf}
%\sdescription{}
%\end{supplement}

%\begin{thebibliography}{99}
%\bibitem[\protect\citeauthoryear{}{}]{r1}
%\bibitem{r1}
%\end{thebibliography}

\printaddresses
\end{document}